\numberwithin{equation}{section} 
\newtheorem{thm}{Theorem}[section]
\newtheorem{cor}[thm]{Corollary}
\newtheorem{lem}[thm]{Lemma}
\newtheorem{prop}[thm]{Proposition}
\theoremstyle{definition}	
\newtheorem{rem}[thm]{Remark}
\newtheorem{exam}[thm]{Example}
\def\defeq{\overset{\mathrm{def}}=}
\def\goth{\mathfrak}
\def\a{\alpha}
\def\b{\beta}
\def\l{\lambda}
\def\Ga{\Gamma}
\def\La{\Lambda}
\def\FF{\mathbb{F}}
\def\GG{\mathbb{G}}
\def\QQ{\mathbb{Q}}
\def\SS{\mathbb{S}}
\def\Z{\mathbb{Z}}
\def\ZZ{{{\Z}}}
\def\Aut{\mathrm{Aut}}
\def\End{{{\mathrm{End}}}}
\def\Ext{\mathrm{Ext}}
\def\Gal{\mathrm{Gal}}
\def\Hom{\mathrm{Hom}}
\def\lim{\mathrm{lim}}
\def\Pic{{\mathrm{Pic}}}
\def\longr{{{\longrightarrow\ }}}
\def\Zdet{{{S^0\langle\det\rangle}}}
\def\Sdet{\Zdet}
\def\det{{{\mathrm{det}}}}
\def\SGG{{{\mathrm{S}\GG}}}
\newcommand{\F}{\mathbb{F}}
\def\WW{{{\mathbb{W}}}}
\def\HV1{{\FF_9[u^{\pm 1}]}}
\date{\today} 
\begin{document}

\title{The Brown-Comenetz dual of the $K(2)$-local sphere 
at the prime $3$} 

\begin{abstract}
We calculate the homotopy type of the Brown-Comenetz dual $I_2$ of the $K(2)$-local sphere at the prime $3$ and show that there is an equivalence in the $K(2)$-local category between $I_2$ and a smash product of the determinant twisted sphere and an exotic element $P$ in the Picard group.  We give a characterization of $P$ as well. A secondary aim of the paper is extend our library of calculations in the $K(2)$-local category.
\end{abstract}
 
\author{Paul G. Goerss}
\address[Paul Goerss]{Department of Mathematics, 
Northwestern University, Evanston, IL 60208, U.S.A. 1-847-491-8544}
\email{poerss@math.northwestern.edu}

\author{Hans-Werner Henn}
\address[Hans-Werner Henn]{Institut de Recherche Math\'ematique Avanc\'ee,
C.N.R.S. - Universit\'e de Strasbourg, F-67084 Strasbourg,
France}
\email{henn@math.unistra.fr}

\thanks{The first author was partially supported by the National Science 
Foundation (USA) and the second author was partially supported by ANR ``HGRT''.}

\maketitle

\section{Introduction}

Because $\QQ/\ZZ$ is an injective abelian group, the functor
$$
X \mapsto \Hom(\pi_\ast X,\QQ/\Z)
$$
defines a cohomology theory on spectra represented by a spectrum $I$;
the Brown-Comenetz dual of $X$ is then the function spectrum 
$IX = F(X,I)$. In particular, $I$ itself is the Brown-Comenetz
dual of the sphere spectrum. This duality on spectra was introduced
in \cite{BC} and some of the basic properties are outlined there. 
Here we are interested in Brown-Comenetz duality
for the $K(n)$-local category. There are a number of reasons to
restrict to this category; for example, the work of Gross and Hopkins
\cite{HopGr} indicates that in the $K(n)$-local category, Brown-Comenetz
duality is a homotopical analog of Grothendieck-Serre duality.

In order
to discuss our results we need a bit of notation. Fix a prime $p$.
Let $K(n)$ be the $n$th Morava $K$-theory at that prime and let
$E_n$ be the associated Lubin-Tate theory. Both theories are
complex orientable; to be concrete, we specify that the formal
group over $K(n)_\ast$ is the Honda formal group $\Gamma_n$ of height $n$,
and the formal group over $(E_n)_\ast$ is a $p$-typical choice of the
universal deformation of the Honda formal group. We write
$$
(E_n)_\ast X \defeq \pi_\ast L_{K(n)}(E_n \wedge X).
$$
For all $X$, $(E_n)_\ast X$ is a twisted $\GG_n$-module,
where
$$
\GG_n = \Aut(\Gamma_n) \rtimes \Gal(\FF_{p^n}/\FF_p) 
$$
is the big Morava stabilizer group. Indeed, by the Hopkins-Miller Theorem,
$\GG_n$ acts on the spectrum $E_n$ itself; this induces the 
action on homology. Some care must be taken to get the topology on
$E_\ast X$ precise; we will take this point up in Section 2.
If $p$ and $n$ are understood we may also write $E$ for $E_n$
and $E_\ast X$ for $(E_n)_\ast X$. We write $L_{K(n)}$ for
localization with respect to $K(n)$.

If $X$ is a $K(n)$-local spectrum, then $IX$ need not be $K(n)$-local. To address
this, let $L_{n}$ be localization at $K(0) \vee \ldots \vee K(n)$
and define the $n$th monochromatic layer $M_nX$ of $X$
to be the fiber of $L_n X \to L_{n-1}X$. Then, for $K(n)$-local $X$,
we define the Brown-Comenetz dual to be
$$
I_n X = F(M_nX,I) = IM_nX
$$
This version of Brown-Comenetz duality was first
laid out by Hopkins (see \cite{PicFirst}). The basic properties are
worked out in \S 10 of \cite{HS}.

Define $I_n = I_nL_{K(n)}S^0$. 
One of the main results of \cite{HopGr} (see also \cite{StricklandGH}) asserts that
\begin{equation}\label{GrHopFor}
(E_n)_\ast I_n \cong \Sigma^{n^2-n} (E_n)_\ast \langle \det \rangle
\end{equation}
where $(E_n)_\ast \langle \det \rangle$ is the 
twisted $\GG_n$-module
obtained by twisting $(E_n)_\ast S^0$ by a determinant action. This
action is explained in detail in Section 5.
 
In particular, $(E_n)_\ast I_n$ is a free module of rank 1 over $(E_n)_\ast$
and, hence, by the results of \cite{PicFirst} and \cite{HovSad}, $I_n$ is an element of the
Picard group $\Pic_n$ of invertible elements in the $K(n)$-local
category. The formula (\ref{GrHopFor})
determines the homotopy type of $I_n$ for large primes or, indeed, for
any prime for which $(E_n)_\ast(-)$ differentiates the elements of the
Picard group. In this case, there is an equivalence
in the $K(n)$-local category
$$
I_n \simeq \Sigma^{n^2-n} \Sdet
$$
where $\Sdet$ is the determinant twisted sphere. The construction of
$\Sdet$ is reviewed in section 6; it has the property that $(E_n)_\ast \Sdet
\cong (E_n)_\ast\langle\det\rangle$. 

More information is needed when the $K(n)$-local Picard group
has exotic elements. Let  $\kappa_n$ be the subgroup of the $K(n)$-local
Picard group with elements the weak equivalence classes of
invertible spectra $X$ so that
$(E_n)_\ast X \cong (E_n)_\ast S^0$ as twisted $\GG_n$-modules.
If $\kappa_n \ne 0$ the computation of $I_n$ is no longer a purely
algebraic problem. For example, if $p=2$ and $n=1$ it is fairly simple to 
show that $I_1 \wedge V(0) \simeq \Sigma^{-2}L_{K(1)}V(0)$, where $V(0)$ is
the mod $2$ Moore spectrum. Since $\Sdet = S^2$ in this case, there must
be some contribution from $\kappa_1$. Using the results of
 \cite{HMS}, one can show $I_1 = \Sigma^2 L_{K(1)}DQ$ where
$L_{K(1)}DQ \in \kappa_1 \cong \ZZ/2$ is the generator. The spectrum
$L_{K(1)}DQ$ 
is the $K(1)$-localization of the ``dual question mark complex"; it
is characterized by the fact that it is in $\kappa_1$ and
$KO\wedge DQ \simeq \Sigma^4 KO$. This example is discussed in
some detail in \cite{HovSad} \S 6.

A similar phenomena occurs at $p=3$ and $n=2$, but it is harder to
characterize the exotic element in $\kappa_2$. Let $V(1)$ be
the four cell complex obtained as the cofiber of the Adams map
$\Sigma^4 V(0) \to V(0)$ of the mod $3$ Moore spectrum. The spectrum
$V(1)$ is a basic example of a type $2$ complex. Our main results
can be encapsulated in the following theorem. Here and below we will write $X \wedge Y$
for $L_{K(n)}(X \wedge Y)$ whenever we work in the $K(n)$-local
category. This is the smash product internal to the $K(n)$-local category.

\begin{thm}\label{BCofS} Let $n=2$ and $p=3$. There is a unique
spectrum $P \in \kappa_2$ so that $P \wedge V(1) \simeq \Sigma^{48}L_{K(2)}V(1)$.
The Brown Comenetz dual $I_2$ 
of $L_{K(2)}S^0$ is given as 
$$
I_2\simeq S^2\wedge \Zdet \wedge P\ . 
$$ 
Furthermore $I_2 \wedge V(1) \simeq \Sigma^{-22}L_{K(2)}V(1)$.
\end{thm}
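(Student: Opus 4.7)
The plan is to break the theorem into three pieces: first use the Gross--Hopkins formula to decompose $I_2$ as $\Sigma^2 \Zdet \wedge P$ for some $P \in \kappa_2$; second compute $I_2 \wedge V(1)$ directly to establish the last assertion; and third combine these with a separate calculation of $\Zdet \wedge V(1)$ to pin down $P \wedge V(1) \simeq \Sigma^{48} L_{K(2)} V(1)$, invoking an earlier structural result for uniqueness.

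For the first step, \eqref{GrHopFor} with $n = 2$ gives $(E_2)_\ast I_2 \cong \Sigma^2 (E_2)_\ast \langle \det \rangle$ as twisted $\GG_2$-modules. Setting $P := I_2 \wedge \Zdet^{-1} \wedge S^{-2}$ (all smash products taken $K(2)$-locally) immediately yields $(E_2)_\ast P \cong (E_2)_\ast S^0$, so $P \in \kappa_2$ by definition of $\kappa_2$ and $I_2 \simeq S^2 \wedge \Zdet \wedge P$ tautologically.

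For the second step, since $V(1)$ is a finite type-$2$ complex we have $M_2 V(1) \simeq L_{K(2)} V(1)$, and the standard identity $I_n \wedge Y \simeq I_n(DY)$ (valid for finite type-$n$ $Y$) gives $I_2 \wedge V(1) \simeq I_2(DV(1))$. Spanier--Whitehead self-duality at $p = 3$ yields $DV(1) \simeq \Sigma^{-6} V(1)$, so $I_2 \wedge V(1) \simeq \Sigma^6 F(L_{K(2)} V(1), I)$, whose homotopy groups are the Pontryagin duals of $\pi_\ast L_{K(2)} V(1)$. Matching these against the explicit description of $\pi_\ast L_{K(2)} V(1)$ at $p = 3$ developed earlier in the paper --- with its Poincar\'e-duality-type symmetry under the $v_2^9$-periodicity --- identifies $I_2 \wedge V(1) \simeq \Sigma^{-22} L_{K(2)} V(1)$, which is the last assertion of the theorem.

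For the third step, a direct $\GG_2$-equivariant calculation of the $\det$-twist on $(E_2)_\ast V(1)$ (which reduces modulo $(3, u_1)$ to the computation of a single character on $\FF_9[u^{\pm 1}]$) identifies $\Zdet \wedge V(1) \simeq \Sigma^{-72} L_{K(2)} V(1)$, and combining with the above gives $P \wedge V(1) \simeq \Sigma^{48} L_{K(2)} V(1)$. Uniqueness of such $P$ in $\kappa_2$ rests on a prior result obtained from the structure of $\kappa_2$ at $p = 3$: the map from $\kappa_2$ to suspension classes of $L_{K(2)} V(1)$ given by $X \mapsto X \wedge V(1)$ is injective. The principal obstacle is the detailed computation of $\pi_\ast L_{K(2)} V(1)$ together with the identification of its Brown--Comenetz dual as a specific suspension; the formal ingredients (Gross--Hopkins, Pontryagin duality, Spanier--Whitehead duality, and the $\det$-twist on $V(1)$) organize the answer, but the numerical shift $-22$ (equivalently $48$) emerges only after working through the explicit homotopy type of $L_{K(2)} V(1)$.
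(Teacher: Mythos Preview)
Your overall logical skeleton is reasonable, and indeed the paper also begins from the Gross--Hopkins formula and the Behrens calculation of $\pi_\ast I_2 \wedge V(1)$. But the order in which you run the argument hides a real gap: in your steps 2 and 3 you repeatedly promote isomorphisms on homotopy groups or on $E_\ast$ to honest $K(2)$-local weak equivalences, and this promotion is exactly the hard part.

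Concretely, Pontryagin duality together with the self-duality pattern in $\pi_\ast L_{K(2)}V(1)$ gives only $\pi_\ast I_2 \wedge V(1) \cong \pi_\ast \Sigma^{-22}V(1)$ as $\FF_3[\beta]\otimes\Lambda(\alpha)$-modules; it does not by itself produce a map realizing this, let alone an equivalence. The paper says this explicitly (Remark~\ref{homo-Sdet-v1}): Theorem~\ref{dual-of-v1} ``does not quite yield a weak equivalence'', because the reconstruction criterion (Theorem~\ref{recon-v1}) does not immediately apply. Likewise, your step~3 asserts $\Sdet \wedge V(1) \simeq \Sigma^{\pm 72}V(1)$ from the $\GG_2$-equivariant computation of the twist on $(E_2)_\ast V(1)$; that computation (Lemma~\ref{homo-Sdet-v1-extra}) only gives an isomorphism of twisted $\GG_2$-modules. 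Upgrading it to an equivalence requires knowing $\pi_\ast \Sdet \wedge V(1)$ well enough to verify the hypotheses of Theorem~\ref{recon-v1}, which in the paper is done via the $(-1)$-eigenspectrum calculation (Corollary~\ref{basic-shift}). Since your definition of $P$ is tautological, these two gaps propagate: you end up with $\pi_\ast P \wedge V(1) \cong \pi_\ast \Sigma^{48}V(1)$, not $P \wedge V(1) \simeq \Sigma^{48}V(1)$.

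The paper avoids this circularity by reversing the flow. It first constructs $P$ intrinsically (Lemma~\ref{v-1-success} and Theorem~\ref{P-define}) as the unique element of $\kappa_2$ for which $v_2^{3}$ is a permanent cycle, and uses Theorem~\ref{more-v1s} to manufacture the actual equivalence $\Sigma^{48}V(1) \simeq P \wedge V(1)$. Separately it proves $\Sdet \wedge V(1) \simeq \Sigma^{72}V(1)$ via Theorem~\ref{recon-v1}. Only then does it write $I_2 \simeq S^2 \wedge \Sdet \wedge P^{\wedge a} \wedge Q^{\wedge b}$ and use the $\pi_\ast$-level information (Theorem~\ref{dual-of-v1}) together with Proposition~\ref{v1-fail} to force $b=0$ and $a=1$. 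The weak equivalence $I_2 \wedge V(1) \simeq \Sigma^{-22}V(1)$ is then a \emph{consequence} (Corollary~\ref{dual-v1-real}), not an input. Your ``prior result'' that $X \mapsto X \wedge V(1)$ is injective on $\kappa_2$ is also not available off the shelf; what is true (Proposition~\ref{v1-fail}) is that for nontrivial truly exotic $Q$ the smash $Q \wedge V(1)$ is not even a suspension of $V(1)$ on homotopy, and this is what the paper uses.
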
 

The calculation that $\pi_\ast I_2 \wedge V(1) \cong \pi_\ast\Sigma^{-22}V(1)$
was known to Mark Behrens (see \cite{BehTop06}) and is the starting point
of our argument. The spectrum $P$ is discussed in Theorem \ref{P-define} below. 
The other parts of the proof of Theorem \ref{BCofS} are spread over Theorem \ref{BCofSproof}
and Corollary \ref{dual-v1-real}.

Let $G_{24} \subseteq \GG_2$ be a maximal finite subgroup containing an element 
of
order $3$. Then it is true that $E_2^{hG_{24}} \wedge P \simeq \Sigma^{48}
E^{hG_{24}}$.
Since $E_2^{hG_{24}}$ is periodic of period $72$, this is analogous to the 
statement at  $n=1$ and $p=2$ that $KO\wedge DQ \simeq \Sigma^4 KO$.
However, this is not enough to characterize the spectrum
$P$. In \cite{Picat3} we constructed
non-trivial elements $Q \in \kappa_2$ so that $E_2^{hG_{24}} \wedge Q
\simeq  E_2^{hG_{24}}$. If we fix one such $Q$, then it follows from
the main theorem of \cite{Picat3} that every element of $\kappa_2$
may be written $ P^{\wedge a} \wedge Q^{\wedge b}$
with $(a,b) \in \Z/3 \times \Z/3$. It then follows from 
Equation (\ref{GrHopFor}) that we have an equivalence
\begin{equation}\label{GH-pic-for}
I_2 = \Sigma^2 \Sdet \wedge P^{\wedge a} \wedge Q^{\wedge b}
\end{equation}
for some $(a,b) \in \Z/3 \times \Z/3$. In Proposition \ref{v1-fail}
we show that $Q^{\wedge b} \wedge V(1)$ 
has the homotopy groups of a suspension of $L_{K(2)}V(1)$   
only if $b=0$. In Theorem \ref{SdetV(1)}
we show that $\Sdet \wedge V(1)
\simeq \Sigma^{72}L_{K(2)}V(1)$; the calculation of $I_2$ then follows.
As this summary indicates, a secondary aim of this paper is to add to our store
of $K(2)$-local calculations. 

The paper ends with an appendix which displays graphically the homotopy
groups of $E_2^{hG_{24}} \wedge V(1)$ and $E_2^{h\GG_2^1} \wedge V(1)$,
where $\GG_2^1 \subseteq \GG_2$ is the kernel of a reduced determinant map.
(See (\ref{det-defined}).) By \cite{shv1} and \cite{GHM} we know
$$
\pi_\ast L_{K(2)}V(1) \cong 
\Lambda(\zeta)\otimes \pi_\ast E_2^{h\GG_2^1} \wedge V(1)
$$
where $\Lambda(\zeta)$ is an exterior algebra on a class of degree $-1$.
\bigskip

{\bf Acknowledgements:} We have had many helpful conversations about
this and related material with Mark Mahowald and Charles Rezk.

\section{Background}

We are working in the $K(n)$-local category and all our
spectra are implicitly localized. In particular, we emphasize that
$X \wedge Y = L_{K(n)}(X \wedge Y)$, as this is the smash product
internal to the  $K(n)$-local category. In addition, we will write
$V(1)$ for $L_{K(2)}V(1)$; this will greatly economize notation in calculation
and in  the statements of our results. If we work with the unlocalized 
version of $V(1)$, we will explicitly say so. That said, we will write
$L_{K(2)}S^0$ for the localized sphere.

We will write $E$ for
$E_n$, where $E_n$ is the Lubin-Tate spectrum with
$$
E_\ast = (E_n)_\ast \cong W(\FF_{p^n})[[u_1,\cdots,u_{n-1}]][u^{\pm1 }]
$$
with $u_i$ in degree $0$ and $u$ in degree $-2$. Note that
$E_0$ is a complete local ring with residue field $\FF_{p^n}$;
the formal group over $E_\ast$ is a choice of universal deformation
of the Honda formal group $\Ga_n$ over $\FF_{p^n}[u^{\pm 1}]$. 
We make the $p$-typical choice of the coordinate of the deformation
so that $v_i = u_iu^{-p^i+1}$, $1 \leq i \leq n-1$,
and $v_n = u^{-p^n+1}$. The endomorphism ring of $\Ga_n$ is
given by the non-commutative polynomial ring
$$
\End(\Ga_n)\cong W(\FF_{p^n})\langle S \rangle/(S^n-p,Sa=\phi(a)S)
$$
where $S$ is the endomorphism given by $S(x) = x^p$ and
$\phi:W(\FF_{p^n}) \to W(\FF_{p^n})$ is the lift of the Frobenius.
Then the automorphism group $\SS_n = \Aut(\Ga_n)$ is  the group of units
in this ring, and the extended group $\GG_n =    \Aut(\Ga_n) \rtimes \Gal(\FF_{p^n}/\FF_p)$
acts on $E$, by the Hopkins-Miller theorem.  The groups $\SS_n$ and $\GG_n$
are the Morava stabilizer group and the big Morava stabilizer group, respectively.

Since $\GG_n$ acts on $E$, $\GG_n$ acts on $E_\ast X$.
The $E_\ast$-module $E_\ast X$ is equipped with the
${\goth m}$-adic topology 
where ${\goth m}$ is the maximal ideal in $E_0$. This topology 
is always topologically complete, but need not be separated. 
With respect to this topology, the group $\GG_n$ acts through
continuous maps and the action is twisted because it
is compatible with the action of $\GG_n$ on the coefficient ring
$E_\ast$. See \cite{GHMR} \S 2 for
some precise assumptions which guarantee that $E_\ast X$ 
is complete and separated. All modules which will be used in this paper 
will in fact satisfy these assumptions, and we will call these modules
twisted $\GG_n$-modules.

We will work at $p=3$ and $n=2$ exclusively. The right action of $\Aut(\Ga_2)$ on $\End(\Ga_2)$ 
defines a determinant map $\det:\Aut(\Ga_2) \to \ZZ_3^\times$ which extends to a determinant map
\begin{equation}\label{det-defined}
\xymatrix{
\GG_2 \cong  \SS_2 \rtimes \Gal(\FF_9/\FF_3)
\rto^-{\det \times 1} &  \ZZ_3^\times \times \Gal(\FF_9/\FF_3) 
\rto^-{p_2} & \ZZ_3^\times\ .
}
\end{equation}
Let $\GG_2^1 \subseteq \GG_2$ be the kernel of the composition
\begin{equation}\label{G21}
\xymatrix{
\GG_2 \rto & \ZZ_3^\times \rto & \ZZ_3^\times/\{\ \pm 1\}\ .
}
\end{equation}
The center of $\GG_2$ is
isomorphic to $\ZZ_3^\times \subseteq \WW(\FF_9)^\times \subseteq \Aut(\Ga_n)$;
there is a further decomposition $\ZZ_3^\times = U_1 \times \{\pm 1\}$ where
$U_1 = 1 +3\ZZ_3$ and there is a non-canonical isomorphism $U_1 \cong \ZZ_3$.
We fix such an isomorphism (it will not matter in the sequel which one we choose) 
and thus we get a fixed isomorphism 
\begin{equation}\label{GG2-splitting}
\ZZ_3 \times \GG_2^1 \cong U_1 \times \GG_2^1 \cong
\GG_2\ . 
\end{equation}
In addition, $\GG_2$ contains two important finite 
subgroups $G_{24}$ and $SD_{16}$ of orders $24$ and $16$ respectively.
The groups $SD_{16}$ is generated by the Frobenius $\phi$ in
$\Gal(\FF_9/\FF_3)$ and $\omega \in \WW(\FF_9)^\times \subseteq
\Aut(\Ga_2)$, a primitive $8$th root of unity. The group $G_{24}$
is a choice for a maximal finite subgroup containing an element of 
order $3$. Specifically, the element
$$
a= -\frac{1}{2}(1+\omega S)
$$
is our usual 
choice of an element of order $3$. Then $G_{24}$ is generated by
$a$, $\omega^2$, and $\omega\phi$.
All of the these subgroups and the homotopy
fixed points $E^{hF}$ where $F$ is finite
are discussed extensively in \cite{GHM} and \cite{GHMR}.

\begin{rem}\label{id-p-v1} A very basic fact we will need about the Smith-Toda complex
$V(1)$ is that the identity $V(1)\to V(1)$ has order $3$ even before $K(2)$-localization;
indeed $3:V(1) \to V(1)$ factors through $V(1)/V(0) = \Sigma^5V(0)$
and $\pi_5V(1) = \pi_6V(1) = 0$ before localization. This implies
$$
\pi_\ast X \wedge V(1)
$$ 
is an $\FF_3$-vector space for all $X$, localized or not.
\end{rem}

\section{The calculation of $\pi_\ast L_{K(2)}V(1)$}

We recapitulate
and expand the results from \cite{GHM} using a slightly different order of ideas. 
Let  $\GG_2^0 \subseteq \GG_2$ be
the index $2$ subgroup given as the kernel of the composition
$$
\xymatrix{
\GG_2 \rto^-\det & \ZZ_3^\times \rto & C_2 = \{ \pm 1 \}
}
$$
where $\det:\GG_2 \to \ZZ_3^\times$ is the extended  determinant map of (\ref{det-defined})
and the second map is reduction modulo $3$. For 
our calculation of $\pi_\ast \Sdet \wedge V(1)$ in \S 6, it will be helpful to make
calculations with $\GG_2^0$ as well.

\subsection{Cohomological detection by centralizers}
Let 
$S_2 \subseteq \SS_2$ be the $3$-Sylow subgroup;
then 
$$
\GG_2 = S_2 \rtimes SD_{16}
$$
where $SD_{16}$ is generated by $\omega \in \FF_9^\times$ and the Frobenius
$\phi$ in the Galois group. We also have $\GG_2^0 \cong S_2 \rtimes D_8$
where $D_8 \subseteq SD_{16}$ is generated by $\omega^2$ and $\phi$. 

Let $C_{\GG_2}(a)$ be the centralizer in $\SS_2$ of the subgroup $C_3$ generated by our chosen 
element $a\in \GG_2$ of order $3$ and let $N_{\GG_2}(a)$ be its normalizer in $\SS_2$. 
The inclusion $C_{\GG_2}(a)\subseteq N_{\GG_2}(a)$ is of index $2$ and the quotient is 
generated by the image of $\omega^2$. The structure of $C_{\GG_2}(a)$ and of $N_{\GG_2}(a)$ 
has been discussed in Proposition 20 of \cite{HennRes} (see also \cite{HennDuke} and \cite{GHM}). 
If we write $C_n(x)$ for the cyclic group of order $n$ generated by $x$
then we have an isomorphism 
$$
C_{\GG_2}(a)\cong \Z_3\times\Z_3\times  C_3(a)\times C_4(\omega\phi)
$$
which is compatible with the conjugation action by $\omega^2$ if we let 
$\omega^2$ act on the right hand side by multiplication with $-1$ on $C_3$, on $C_4$ 
and on one of the factors $\Z_3$, and trivially on the other factor $\Z_3$. 

If $C:=C_{S_2}(a)$ denotes the centralizer of $a$ in $S_2$ then this isomorphism restricts to an 
isomorphism 
$$
C_3(a)\times \ZZ_3\times\ZZ_3\to C \ .
$$ 
If $N:=N_{S_2}(a)$ is the normalizer of $a$ in $S_2$ then we get isomorphisms 
$N\cong C\rtimes C_4(\omega^2)$ and  
$N_{\GG_2}(a)\cong C\rtimes Q_8$ with $Q_8$ the quaternion group 
of order $8$ generated by $\omega^2$ and $\omega\phi$. 

The action of $S_2$ on $\F_9[u^{\pm 1}]$ is trivial and therefore we get 
$$
H^\ast(C,\HV1) \cong H^\ast(C_3,\HV1) \otimes \La(a_1,\zeta_1) \cong \FF_9[y_1,u^{\pm 1}]
\otimes \La(x_1,a_1,\zeta_1)
$$
where $x_1 \in H^1(C_3,\FF_3)$ and $y_1  \in H^1(C_3,\FF_3)$ are the usual generators of the 
cohomology of a cyclic group of order $3$, and $a_1$ and $x_1$ are one dimensional 
classes accounting for the two factors $\Z_3$ of $C$.
Conjugation by $\omega$ defines an isomorphism
$$
\omega_\ast: H^\ast(C,\FF_3) \longr H^\ast(\omega C\omega^{-1},\FF_3)
$$
and we will write $x_2 = \omega_\ast x_1$ and so on. {\bf Note:} In what follows we use the
subscripts to distinguish between cohomology classes of the two centralizers, 
except possibly when referring to the class $u$.

The conjugation action of $\GG_2$ on the normal subgroup $S_2$ induces an action of 
$SD_{16}\cong \GG_2/S_2$ on $H^\ast(S_2,\HV1)$. Likewise the conjugation action of 
$N_{\GG_2}(a)$ on $C$ induces an action of the quotient group $Q_8$ on $H^\ast(C,\HV1)$. 
This action of $Q_8$ extends to an action of $SD_{16}$ on 
$H^\ast(C,\HV1)\times H^\ast(\omega C\omega^{-1},\HV1)$. 
The bedrock calculation is the following result from \cite{HennDuke}. (See also \S 4 of \cite{GHMR}.) 

There is an $SD_{16}$-linear monomorphism  
\begin{align}\label{rho-defined}
\rho:H^\ast(S_2,\FF_9[u^{\pm 1}]) &\ \longr  H^\ast(C,\FF_9[u^{\pm 1}])) \times 
H^\ast(\omega C\omega^{-1},\FF_9[u^{\pm 1}])\\
& \cong\Big(\prod_{i=1,2} \FF_3[y_i] \otimes \La(a_i,x_i,\zeta_i)\Big) \otimes \FF_9[u^{\pm 1}]\nonumber
\end{align} 
whose image is the free $\FF_9[y_1+y_2,u^{\pm 1}] \otimes \Lambda(\zeta_1 + \zeta_2)$-module
generated by the elements
\begin{equation}
\begin{array}{ccccc}\label{hwclasses}
1, & x_1, & x_2, & y_1, \\
x_1a_1-x_2a_2, & y_1a_1, & y_2a_2, & y_1x_1a_1& . 
\end{array}
\end{equation}
Note that $y_1$ could be replaced with $y_2$ and $y_1x_1a_1$ with $y_2x_2a_2$.

The action of $SD_{16}$ is determined by the formulas
(see (4.6) and (4.7) of \cite{GHMR}) 
\begin{equation}\label{GHMRFor1}
\begin{array}{cccccccccc}
\omega_*(x_1)&=&x_2, & \omega_*(y_1)&=&y_2, & \omega_*(a_1)&=&a_2\\ 
\omega_*(x_2)&=&-x_1, & \omega_*(y_2)&=&-y_1, & \omega_*(a_2)&=&-a_1 \\
\\
\phi_*(x_1)&=&-x_2, & \phi_*(y_1)&=&-y_2, & \phi_*(a_1)&=&-a_2\\
\phi_*(x_2)&=&-x_1, & \phi_*(y_2)&=&-y_1, & \phi_*(a_2)&=&-a_1 &\ .
\end{array}
\end{equation}
Furthermore,  
\begin{equation}\label{GHMRFor2}
\begin{array}{ccccccc}
\omega_*(u)&=& \omega u, & \omega_*(\zeta_1)&=& \zeta_2, & \omega_*(\zeta_2)=\zeta_1\\
\phi_*(u)&=& u, & \phi_*(\zeta_1)&=& \zeta_2, & \phi_*(\zeta_2)=\zeta_1\\
\end{array}
\end{equation} 
and $\omega_*$ acts by an $\FF_9$-linear algebra map while $\phi_*$ is an $\FF_3$-linear  
algebra map which is $\FF_9$-antilinear, i.e. $\phi_*(\lambda z)=\lambda^3\phi_*(z)$
if $\l\in \FF_9$. 

\subsection{The cohomology of $\GG_2$ and $\GG_2^0$ with coefficients in $\HV1$.} 
\label{coh-G2}  
After taking invariants with respect to the subgroup $D_8$ generated by $\omega^2$ 
and $\phi$ the map $\rho$ of (\ref{rho-defined}) induces a monomorphism  
$$
\widetilde{\rho}: H^\ast (\GG_2^0,\HV1) \cong H^\ast (S_2,\HV1)^{D_8}
\to \big(H^\ast(N,\HV1) \times H^\ast(\omega N \omega^{-1}, \HV1)\big)^{C_2(\phi)}
$$
In this subsection we will write elements in the target as couples. 
%
Then the following elements  
\begin{equation}\label{invclasses} 
\begin{array}{cc}
v_2^{1/2}:= (u^{-4},u^{-4}) & w:  =  (\omega^2u^{-4},\omega^{-2}u^{-4}) \\ 
\zeta:  =  (\zeta_1,\zeta_2) & a_{35}:=(\omega a_1u^{-18},\omega^{-1}a_2 u^{-18}) \\ 
\alpha:=(\omega x_1u^{-2},\omega^{-1}x_2u^{-2}) 
& \beta:=(\omega^3y_1u^{-6},\omega^{-3}y_2u^{-6})\ . \\
\end{array} 
\end{equation} 
are easily checked to be $D_8$-invariant, and with the exception of $v_2^{1/2}$  
they are even $SD_{16}$-invariant. 
We have $\omega_*(v_2^{1/2})=-v_2^{1/2}$, 
so $v_2^{1/2}$ is still an eigenvector for the action of $SD_{16}$. Note also that $w^2=-v_2$. 
The reason for the power $u^{-18}$ as well as the 
name of the class $a_{35}$ will become clear in \S \ref{Hom-EN}. The elements $\a$ and $\b$ detect 
(up to sign) the images of the elements $\alpha_1$ and $\beta_1$ of $\pi_\ast S^0$. 

By using (\ref{GHMRFor1}) and (\ref{GHMRFor2})  
we can easily check that the target of $\widetilde{\rho}$ is the free
module over $\FF_9[\beta,v_2^{\pm 1/2}] \otimes \La(\zeta,\alpha,a_{35})$ on the class
$1:=(1,1)$. (Note that we need to twist the $\FF_9$-module structure on the second factor 
$H^\ast(\omega N \omega^{-1}, \HV1)$ by Frobenius for this to be true). 
It is also the free module over $\FF_3[\beta,v_2^{\pm 1/2}] \otimes \La(\zeta,\alpha,a_{35})$ 
on the classes $1$  and $w$. 
The second point of view is better adapated for understanding the residual action of 
$\omega$ on the $D_8$-invariants and for describing the image of $\tilde{\rho}$ after 
passing to those invariants.  

Furthermore, by (\ref{hwclasses}), (\ref{GHMRFor1}) and (\ref{GHMRFor2}) the image of 
$\widetilde{\rho}$ is the free module over $\FF_3[\beta,v_2^{\pm 1/2}] \otimes \La(\zeta)$ 
on the $8$ classes 
\begin{equation}\label{gens-for-s0}
\begin{array}{cccc} 
1:=(1,1)&\ \  \alpha &\ \    w\alpha  &\ \  w\beta \\  
\a a_{35} &\ \   \beta a_{35} &\ \  w\beta a_{35} &\ \   w\b\alpha a_{35} \ . \\  
\end{array}
\end{equation}
Thus, missing from the image is the free $\FF_3[v_2^{1/2}]\otimes\La(\zeta)$-submodule 
of the target of $\widetilde{\rho}$ generated by the $4$ classes 
$$
w,\ a_{35},\ wa_{35},\ w\alpha a_{35}\ .
$$

Note that all of the $8$ classes in (\ref{gens-for-s0}) are eigenvectors for the residual action of 
$\omega$ on the $D_8$-invariants. Together with the fact that 
$v_2^{1/2}$ is an eigenvector with eigenvalue $-1$ for this residual action 
this yields the following result which is an extension of Corollary 19 of \cite{GHM}.  

\begin{thm}\label{G2-cohom} 

1) The restriction map 
$$
H^*(\GG_2^0,\FF_9[u^{\pm 1}])\to \Big(H^*(N,\FF_9[u^{\pm 1}])\times 
H^*(\omega N\omega^{-1},\FF_9[u^{\pm 1}])\Big)^{C_2(\phi)} 
$$ 
is a monomorphism. Its target is isomorphic to 
$\FF_9[\beta,v_2^{\pm 1/2}] \otimes \La(\zeta,\alpha,a_{35})$ and its image   
is the free module over $\FF_3[\beta,v_2^{\pm 1/2}] \otimes \La(\zeta)$ on the $8$ classes 
$1$, $\a$, $w\a$, $w\b$, $\a a_{35}$, $\b a_{35}$, $w\b a_{35}$ and $w\b\a a_{35}$. 

2) The restriction map 
$$
H^*(\GG_2,\FF_9[u^{\pm 1}])\to \big(H^*(N,\FF_9[u^{\pm 1}])\times 
H^*(\omega N\omega^{-1},\FF_9[u^{\pm 1}])\big)^{C_2(\phi)\times C_2(\omega)}
$$ 
is a monomorphism. Its target is isomorphic to the free module over 
$\FF_3[\beta,v_2^{\pm 1}] \otimes \La(\zeta,\alpha,a_{35})$ generated by $1$ and $w$, and its image   
is the free module over $\FF_3[\beta,v_2^{\pm 1}] \otimes \La(\zeta)$ on the $8$ classes 
$1$, $\a$, $w\a$, $w\b$, $\a a_{35}$, $\b a_{35}$, $w\b a_{35}$ and $w\b\a a_{35}$. 

3)  The residual action of $\omega$ on $H^*(\GG_2^0,\HV1)$ induces an eigenspace decomposition 
$$
H^*(\GG_2^0,\FF_9[u^{\pm 1}])\cong H^*(\GG_2,\FF_9[u^{\pm 1}])\oplus 
v_2^{1/2}H^*(\GG_2,\FF_9[u^{\pm 1}]) 
$$
in which $H^*(\GG_2,\FF_9[u^{\pm 1}])$ is the $(+1)$ eigenspace and 
$v_2^{1/2}H^*(\GG_2,\FF_9[u^{\pm 1}])$ is the $(-1)$-eigenspace. \qed
\end{thm}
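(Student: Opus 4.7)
The plan is to deduce all three parts from the $SD_{16}$-equivariant monomorphism $\rho$ of (\ref{rho-defined}) by taking invariants under successive subgroups. Since $|D_8|=8$ and $|SD_{16}|=16$ are coprime to $p=3$, the Lyndon-Hochschild-Serre spectral sequences for the extensions $\GG_2^0 = S_2 \rtimes D_8$ and $\GG_2 = S_2 \rtimes SD_{16}$ collapse, giving the identifications
$$
H^*(\GG_2^0,\HV1) \cong H^*(S_2,\HV1)^{D_8}, \qquad
H^*(\GG_2,\HV1) \cong H^*(S_2,\HV1)^{SD_{16}}.
$$
Taking $D_8$-invariants of $\rho$ preserves injectivity (invariants are left exact), giving the map $\widetilde{\rho}$ of part 1, and taking further $C_2(\omega) = SD_{16}/D_8$-invariants produces the map of part 2.

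For part 1 I would proceed in three steps. First, verify that the six classes in (\ref{invclasses}) are $D_8$-invariant by direct calculation from (\ref{GHMRFor1})--(\ref{GHMRFor2}); the powers of $\omega$ appearing there are chosen precisely so that the Frobenius-antilinearity of $\phi$ and the scaling of $u$ under $\omega$ cancel. Second, compute the target of $\widetilde{\rho}$ by an explicit eigenspace analysis of the $D_8$-action on $H^*(C,\HV1) \times H^*(\omega C\omega^{-1},\HV1)$---in which $\phi$ exchanges the two factors (with a Frobenius twist on the $\FF_9$-structure) and $\omega^2$ scales within each factor---arriving at the algebra $\FF_9[\beta, v_2^{\pm 1/2}] \otimes \La(\zeta, \alpha, a_{35})$, which, using $w^2 = -v_2$, is equivalently the free $\FF_3[\beta, v_2^{\pm 1/2}] \otimes \La(\zeta, \alpha, a_{35})$-module on $\{1, w\}$. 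Third, apply $\widetilde{\rho}$ to the eight generators of (\ref{hwclasses}) and match them against the eight classes of (\ref{gens-for-s0}); this forces the image to be the claimed free $\FF_3[\beta, v_2^{\pm 1/2}] \otimes \La(\zeta)$-submodule, with the four classes $w, a_{35}, wa_{35}, w\alpha a_{35}$ lying in the target but not in the image.

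Part 2 is obtained from part 1 by taking further $C_2(\omega)$-invariants. The key input is that $\omega_*(v_2^{1/2}) = -v_2^{1/2}$, forcing only even powers of $v_2^{1/2}$ to survive (hence $v_2^{\pm 1}$), while (\ref{GHMRFor1})--(\ref{GHMRFor2}) show that each of $\beta, \zeta, \alpha, a_{35}$ and each of the eight generators listed in (\ref{gens-for-s0}) is an $\omega$-eigenvector with eigenvalue $+1$. The $\{1, w\}$-basis of the target persists, and the $\FF_9$-coefficients restrict to $\FF_3$ once the $-1$-eigenspace contributions are removed. Part 3 is then formal: because $\omega^2 \in D_8$, the residual action of $\omega$ on $H^*(\GG_2^0,\HV1)$ satisfies $\omega^2 = 1$, so this cohomology decomposes into $\pm 1$-eigenspaces; the $+1$-eigenspace is $H^*(\GG_2,\HV1)$ by part 2, and multiplication by the unit $v_2^{1/2}$ is an isomorphism that reverses the $\omega$-eigenvalue, identifying the $-1$-eigenspace with $v_2^{1/2} \cdot H^*(\GG_2,\HV1)$.

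The main obstacle will be the bookkeeping in the latter two steps of part 1: one must translate the eight $D_8$-invariant elements arising from the generators (\ref{hwclasses}) of $\rho$'s image into the notation of (\ref{invclasses}) and (\ref{gens-for-s0}), and then confirm that the four ``missing'' classes $w, a_{35}, wa_{35}, w\alpha a_{35}$ genuinely fail to be $\widetilde{\rho}$-images. The subtlety is that the Frobenius twist on the second factor of the target and the sign conventions in (\ref{GHMRFor1}) interact nontrivially, so sign and scalar computations must be done carefully.
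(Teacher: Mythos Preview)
Your proposal is correct and follows essentially the same route as the paper, which embeds its proof in the discussion of \S\ref{coh-G2} preceding the theorem (hence the bare \qed): take $D_8$- and then $SD_{16}$-invariants of the monomorphism $\rho$, identify the target via the explicit classes (\ref{invclasses}), read off the image from (\ref{hwclasses}), and deduce part 3 from the eigenvalue of $v_2^{1/2}$.

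One small imprecision: your explanation that ``the $\FF_9$-coefficients restrict to $\FF_3$ once the $-1$-eigenspace contributions are removed'' is not quite the right mechanism. Taking the $+1$-eigenspace of an $\FF_9$-linear involution would still yield an $\FF_9$-module. The actual reason is that, with respect to the twisted $\FF_9$-structure on the target (the Frobenius twist on the second factor noted in the paper), the residual action of $\omega$ is Frobenius-semilinear rather than $\FF_9$-linear, so its fixed points only form an $\FF_3$-module. The paper sidesteps this by passing to the $\FF_3$-basis $\{1,w\}$ before taking $\omega$-invariants, which is exactly what you do as well; just be careful with the justification.
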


\subsection{The homotopy groups of $E^{hG_{24}}\wedge V(1)$}\label{Hom-EV} 
In this subsection we recall the calculation in \cite{GHM} and add some information on Toda brackets 
and $\a$-multiplications.

We have that $G_{24}= C_3 \rtimes Q_8$ where $C_3$ generated by our element
$a$ of order $3$ and $Q_8$ is generated by $\omega^2$ and $\omega\phi$. 
The element of order $3$ acts trivially on $\HV1$ and
$$
H^\ast(G_{24},\HV1) \cong H^\ast(C_3,\HV1)^{Q_8}.
$$
We have
$$
H^\ast (C_3,\HV1) \cong \FF_9[y_1,u^{\pm 1}] \otimes \La(x_1).
$$
The action of $\omega^2$ is $\FF_9$-linear; however, the action
of $\omega\phi $ is $\FF_9$-antilinear.

By abuse of notation we define  
\begin{equation}
\label{the-basics} 
\alpha = \omega x_1u^{-2}, \ \ \ \beta = \omega^3y_1u^{-6}, \ \ \ w=  \omega^2 u^{-4} 
\end{equation}
of bidegrees $(1,4)$, $(2,12)$ and $(0,8)$ respectively. As before these elements
$\alpha$ and $\beta$ will detect (up to sign) the images of the
elements $\alpha_1$ and $\beta_1$ of $\pi_\ast S^0$.

Using the formulas (\ref{GHMRFor1}) we have
\begin{equation}\label{E2-G24-V(1)}
H^\ast(G_{24},\HV1) = \FF_3[\beta,w^{\pm1 }] \otimes \La(\alpha).
\end{equation} 

By the calculations of many people
(see \cite{GHMR} \S 3, for example), the spectrum $E^{hG_{24}}$ has
an invertible class $\Delta^3 \in \pi_{72}E^{hG_{24}}$; this class reduces
to $-w^9$. In the homotopy fixed point spectral sequence
$$
H^\ast(G_{24},\FF_9[u^{\pm 1}]) \Longrightarrow \pi_\ast E^{hG_{24}}\wedge V(1)
$$
all differentials commute with multiplication by $\alpha$, $\beta$,
and $w^9$ and are determined by 
\begin{align}\label{basic-diffs}
d_5(w^{i+3}) &=\pm \alpha\beta^2w^{i},\qquad 0 \leq i \leq 5\ ;\nonumber\\
\\
d_9(w^{i+6}\alpha) &=\pm \beta^5w^i,\qquad 0 \leq i \leq 2\ .\nonumber
\end{align}

In particular $w^i$, $0 \leq i \leq 2$, and $w^i\alpha$, $0 \leq i \leq 5$, are permanent
cycles.  The first of these differentials implies that the Toda bracket
\begin{equation}\label{whatisxi}
z_i=\langle \alpha, \alpha, \beta^2w^i\rangle,\qquad 0 \leq i \leq 2
\end{equation} 
is non-zero and detected by $\alpha w^{i+3}$ and a shuffle argument
with Toda brackets gives, perhaps up to sign,
\begin{equation}\label{a-mult}
\alpha z_i = \langle \alpha, \alpha, \alpha\rangle \beta^2w^{i} = 
\beta^3w^{i},\qquad 0 \leq i \leq 2.
\end{equation} 
The homotopy groups  $\pi_\ast E^{hG_{24}}\wedge V(1)$ are
$72$-periodic on $w^9 = \omega^2 v_2^{9/2}$. They are displayed graphically in Figure 1.

Since $\pi_{72+i}E^{hG_{24}}\wedge V(1)=0$ for $i=4$ and $i=5$,
the homotopy class $w^9$ can be extended to a map
$$
\Sigma^{72}V(1) \to E^{hG_{24}}\wedge V(1).
$$
Since $E^{hG_{24}}$ is a ring spectrum, this can be extended to an
equivalence
\begin{equation}\label{tmf-smash-v1-per}
\xymatrix{
\Sigma^{72}E^{hG_{24}}\wedge V(1) \rto^{\simeq}
& E^{hG_{24}}\wedge V(1).
}
\end{equation}
Thus we say $E^{hG_{24}}\wedge V(1)$ is $72$-periodic. Of course, it is a standard 
fact that $E^{hG_{24}}$ is itself $72$-periodic. For one reference among many, see
\cite{GHMR} \S 3.

We will also need the homotopy groups of $E^{hG_{12}} \wedge V(1)$, where
$G_{12} = C_3 \rtimes C_4(\omega^2)$. The 
$E_2$-term of the homotopy fixed point spectral sequence is
$$
H^\ast (G_{12},\HV1) = H^\ast(C_3,\HV1)^{C_4} = \FF_9[\beta,v_2^{\pm 1/2}] \otimes
\La(\alpha)
$$
and all the differentials are determined by (\ref{basic-diffs}). In fact, 
$$
\pi_\ast E^{hG_{12}} \wedge V(1) \cong
\pi_\ast (E^{hG_{24}} \wedge  V(1)) \otimes \FF_9
$$
and
$$
\pi_\ast E^{hG_{24}} \wedge V(1)
\cong [\pi_\ast E^{hG_{12}} \wedge V(1)]^{C_2(\phi\omega)}.
$$

\subsection{The homotopy groups of $\pi_\ast E^{hN} \wedge V(1)$}\label{Hom-EN}

Recall that $N$ denotes the normalizer of $a$ in $\SS_2$  
and that $N= C\rtimes C_4(\omega^2)$. By (\ref{GHMRFor1}) we have
$$
H^\ast(C,\HV1) \cong H^\ast(C_3,\HV1) \otimes \La(a_1,\zeta_1)
$$
with $(\omega^2)_\ast a_1 = -a_1$ and $(\omega^2)_\ast \zeta_1 = \zeta_1$. 
By continued abuse of notation we name the class $\omega^{-18}a_1$ of bidegree $(1,36)$ simply by 
$a_{35}$, $u^{-4}$ by $v_2^{1/2}$ and $\zeta_1$ by $\zeta$. 
With this and the notation used in \S \ref{Hom-EV} we have
$$
H^\ast (N,\FF_9[u^{\pm 1}])
\cong [H^\ast (C,\FF_9[u^{\pm 1}])]^{C_4(\omega^2)} \cong \FF_9[\beta,v_2^{\pm 1/2}] \otimes 
\Lambda  (\alpha,a_{35},\zeta) \ . 
$$ 
Furthermore, if $N^1 = \SS_2^1 \cap N$, then $N^1=(C_3\times \Z_3)\rtimes C_4(\omega^2)$ and 
$\omega^2$ acts by multiplication with $-1$ on $C_3$ and on $\Z_3$ and thus we have
$$
H^\ast (N^1,\FF_9[u^{\pm 1}]) \cong \FF_9[\beta,v_2^{\pm 1/2}] \otimes \Lambda 
(\alpha,a_{35})\ .
$$

The inclusion $G_{12} \subseteq N^1$ gives the map on cohomology defined by sending
$a_{35}$ to zero. The selection of degree 36 and the name $a_{35}$ is explained 
by the existence of a fiber sequence 
$$
\Sigma^{35} E^{hG_{12}} \longr E^{hN_1} \longr E^{hG_{12}}.
$$
which was established in Corollary 13 of \cite{GHM}. The maps in the short exact sequence
$$
0 \to a_{35}H^\ast(G_{12},\FF_9[u^{\pm 1}]) \to H^\ast(N^1,\FF_9[u^{\pm 1}])
\to H^\ast(G_{12},\FF_9[u^{\pm 1}]) \to 0
$$
are maps of $E_2$-terms of spectral sequences abutting to a short exact sequence
in homotopy groups
$$
0 \to \pi_\ast \Sigma^{35}E^{hG_{12}} \wedge V(1) \to \pi_\ast E^{hN_1}\wedge V(1) \to
\pi_\ast E^{hG_{12}} \wedge V(1)\to 0.
$$
This last statement is proved in Lemma 14 of \cite{GHM} by a computation; it is not
a conceptual result. We conclude that there is a splitting
\begin{equation}
\pi_\ast E^{hN^1}\wedge V(1)  \cong \pi_\ast (E^{hG_{12}}\wedge V(1))\otimes 
\Lambda(a_{35}).
\end{equation}
where we have confused $a_{35}$ with some choice of homotopy
class in $\pi_{35} E^{hN^1}$ detected by $a_{35}$.
This should {\it not} be interpreted as a
ring isomorphism, but only as an isomorphism of modules over
$\FF_9[\beta] \otimes \Lambda(\alpha)$.

To get the homotopy groups of $E^{hN}\wedge V(1)$ we use the
fiber sequence 
$$
\xymatrix{
E^{hN} \rto & E^{hN^1} \rto^{id-k} & E^{hN^1}
}
$$
where $k$ is a topological generator of $\ZZ_3 \cong N/N^1$.
By Proposition 17 of \cite{GHM} (again, a case-by-case calculation) we have that 
$$
0=
\pi_\ast(id-k):\pi_\ast E^{hN^1}\wedge V(1) \longr
\pi_\ast E^{hN^1}\wedge V(1)
$$
and we can conclude that there is an isomorphism of 
$\FF_3[\beta]\otimes\Lambda(\alpha)$-modules
\begin{equation}\label{N-smash-v1}
\pi_\ast E^{hN}\wedge V(1)\cong (\pi_\ast E^{hN^1}\wedge V(1))\otimes \Lambda(\zeta)\cong
\pi_\ast (E^{hG_{12}}\wedge V(1))\otimes \Lambda(a_{35},\zeta)\ .
\end{equation}

The spectrum $E^{hN}\wedge V(1)$ is periodic, although the argument
is not quite as simple as in (\ref{tmf-smash-v1-per}). We have the following result.

\begin{prop}\label{N-period} The spectra $E^{hN}\wedge V(1)$ and
$E^{hN^1}\wedge V(1)$ are $72$-periodic.
\end{prop}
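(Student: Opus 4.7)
The plan is to build a $72$-fold periodicity self-map on the spectrum level and verify it is an equivalence by examining its action on the $E_\infty$-page of the HFPSS. We treat $E^{hN^1} \wedge V(1)$ first; the case of $E^{hN} \wedge V(1)$ then follows from the identical machine applied to the fiber sequence $E^{hN} \to E^{hN^1} \xrightarrow{\mathrm{id}-k} E^{hN^1}$ smashed with $V(1)$, since $\pi_\ast(\mathrm{id}-k)=0$ turns the long exact sequence into a short exact sequence, hence in particular a surjection $\pi_{72} E^{hN} \wedge V(1) \twoheadrightarrow \pi_{72} E^{hN^1} \wedge V(1)$ through which the periodicity class lifts.

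For $E^{hN^1} \wedge V(1)$, use the short exact sequence
$$
0 \to \pi_\ast \Sigma^{35} E^{hG_{12}} \wedge V(1) \to \pi_\ast E^{hN^1} \wedge V(1) \to \pi_\ast E^{hG_{12}} \wedge V(1) \to 0
$$
recalled in subsection \ref{Hom-EN} to lift the periodicity class $\Delta^3 \in \pi_{72} E^{hG_{12}} \wedge V(1)$ of (\ref{tmf-smash-v1-per}) to a class $\tilde{\Delta}^3 \in \pi_{72} E^{hN^1} \wedge V(1)$, represented by a map $S^{72} \to E^{hN^1} \wedge V(1)$. Extend cellularly over $V(1)$ to a map $\tilde\alpha \colon \Sigma^{72} V(1) \to E^{hN^1} \wedge V(1)$, arguing exactly as in the proof of (\ref{tmf-smash-v1-per}): the obstructions live in $\pi_{76}$ and $\pi_{77}$ of $E^{hN^1} \wedge V(1)$, and by the splitting (\ref{N-smash-v1}) they reduce to obstructions visible in the $E^{hG_{12}} \wedge V(1)$ chart. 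Since $E^{hN^1}$ is a ring spectrum, smash $\tilde\alpha$ with $E^{hN^1}$ and post-compose with multiplication to obtain an $E^{hN^1}$-module self-map $p \colon \Sigma^{72} E^{hN^1} \wedge V(1) \to E^{hN^1} \wedge V(1)$.

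To see $p$ is an equivalence, examine it on the $E_\infty$-page of the HFPSS whose $E_2$-term is $\FF_9[\beta, v_2^{\pm 1/2}] \otimes \Lambda(\alpha, a_{35})$. Any lift of $\Delta^3$ is detected at bidegree $(0, 72)$ by $v_2^{9/2}$ up to a unit: the $(0, 72)$-piece of $E_2$ is the one-dimensional $\FF_9 \cdot v_2^{9/2}$, while the ambiguity in the lift lies in the kernel of the restriction $\pi_{72} E^{hN^1} \wedge V(1) \to \pi_{72} E^{hG_{12}} \wedge V(1)$, i.e., in the $a_{35}$-summand of (\ref{N-smash-v1}), which is in strictly positive filtration (since $a_{35}$ is killed by the restriction $H^\ast(N^1,\HV1) \to H^\ast(G_{12},\HV1)$). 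Because $v_2^{1/2}$ is invertible in the $E_2$-page, multiplication by $v_2^{9/2}$ is an iso on $E_\infty$, so $p$ induces an iso on the associated graded of $\pi_\ast E^{hN^1} \wedge V(1)$ and is therefore an equivalence. The main obstacle is the cellular extension step: verifying that the obstructions to extending $\tilde{\Delta}^3$ from $S^{72}$ to $\Sigma^{72} V(1)$ vanish. This is essentially bookkeeping using the $E^{hG_{12}} \wedge V(1)$ chart and the splitting (\ref{N-smash-v1}), paralleling the argument that established (\ref{tmf-smash-v1-per}).
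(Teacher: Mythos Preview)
Your treatment of $E^{hN^1}\wedge V(1)$ is correct and parallel to the paper's approach: the splitting $\pi_\ast E^{hN^1}\wedge V(1)\cong\pi_\ast(E^{hG_{12}}\wedge V(1))\otimes\Lambda(a_{35})$ shows that $\pi_{76}$ and $\pi_{77}$ of $E^{hN^1}\wedge V(1)$ decompose as $\pi_4\oplus\pi_{41}$ and $\pi_5\oplus\pi_{42}$ of $E^{hG_{12}}\wedge V(1)$, all of which vanish, so the cellular extension over $\Sigma^{72}V(1)$ goes through exactly as in (\ref{tmf-smash-v1-per}).

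The gap is in your passage to $E^{hN}\wedge V(1)$. Lifting the periodicity class through the surjection $\pi_{72}E^{hN}\wedge V(1)\twoheadrightarrow\pi_{72}E^{hN^1}\wedge V(1)$ is fine, but ``the identical machine'' then fails at the extension step. With the extra $\Lambda(\zeta)$ factor in (\ref{N-smash-v1}), the obstruction group $\pi_{77}E^{hN}\wedge V(1)$ acquires a contribution from the $a_{35}\zeta$-summand, namely $\pi_{43}E^{hG_{12}}\wedge V(1)$, and this is nonzero: the class $w^5\alpha$ survives there. So the top-cell obstruction cannot be dismissed on dimensional grounds. The paper confronts this directly by treating $E^{hN}$ first and invoking Proposition~\ref{v1-maps}: since $g\beta=0$ in $\pi_{15}W$, the obstruction class $\bar{y}g$ is annihilated by $\beta$, and one then checks that $\beta$-multiplication is injective on $\pi_{77}E^{hN}\wedge V(1)$. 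Your sketch needs either this $\beta$-injectivity argument or some other mechanism to kill the obstruction; merely citing the fiber sequence is not enough, since $\pi_\ast(\mathrm{id}-k)=0$ neither makes $(\mathrm{id}-k)\wedge V(1)$ null nor lets you lift the already-extended map $\Sigma^{72}V(1)\to E^{hN^1}\wedge V(1)$ to $E^{hN}\wedge V(1)$.
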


\begin{proof} We do the case of $N$; the case $N_1$ follows the same line of
reasoning. To begin, we note $\pi_{72+4}E^{hN}\wedge V(1)=0$.
This is equivalent to noting that $\pi_jE^{hG_{24}}\wedge V(1)=0$
for $j=4,5,41$, and $42$, and this can be read off Figure 1. But note
$\pi_{72+5}E^{hN} \ne 0$ as $\pi_{43}E^{hG_{24}}\wedge V(1)\ne 0$.
However multiplication by $\beta$ is a monomorphism on $\pi_{72+5}E^{hN}$
and we can appeal to Proposition \ref{v1-maps}  
below to extend $v_2^{9/2}$ to a map 
$$
\Sigma^{72} V(1) \to E^{hN}\wedge V(1)\ . 
$$
This map can be then be extended to an equivalence of $E_2^{hN}$-module spectra. 
\end{proof}

\subsection{The homotopy groups of $E^{h\GG_2^0}\wedge V(1)$ and of $L_{K(2}V(1)$.}
\label{him-V1}  
The differentials and multiplicative extensions
in the spectral sequence
$$
H^\ast(\GG_2^0,\FF_9[u^{\pm 1}]) \Longrightarrow
\pi_\ast(E^{h\GG_2^0} \wedge V(1)) 
$$
are now forced by naturality and the homotopy fixed point spectral sequence of 
$\pi_\ast E^{hN} \wedge V(1)$. 

In fact, there is a diagram of spectral sequences 
$$
\xymatrix@C=15pt{
H^\ast(\GG_2^0,\HV1) \rto  \ar@2{->}[d] 
& [H^\ast (N,\HV1) \times H^\ast(\omega^{-1}N\omega,\HV1)]^{C_2(\phi)}
 \ar@2{->}[d] \\  
\pi_\ast (E^{h\GG_2^0}) \wedge V(1) \rto&
\pi_\ast (E^{hN} \vee E^{h\omega^{-1}N\omega})^{C_2(\phi)} \wedge V(1) \\
}
$$ 
and  the differentials on the left hand side are determined by 
(\ref{basic-diffs}) and (\ref{N-smash-v1}) (cf. \cite{GHM} \S 3). 

More precisely, by Theorem \ref{G2-cohom} the $E_2$-term of the right hand side spectral sequence 
is a free module over $\FF_9[\beta,v_2^{\pm 1/2}] \otimes \La(\zeta,\alpha,a_{35})$ 
on the class $1$. This $E_2$-term can also be considered as a free module over 
$\FF_3[\beta,v_2^{\pm 1/2}] \otimes \La(\zeta,\alpha,a_{35})$
on the classes $1$ and $w$. The $E_2$-term of the left hand side spectral sequence injects into 
this $E_2$-term as the free module over $\FF_3[\beta,v_2^{\pm 1/2}] \otimes \La(\zeta)$ on the $8$ 
classes $1$, $\a$, $w\a$, $w\b$, $\a a_{35}$, $\b a_{35}$, $w\b a_{35}$ and $w\b\a a_{35}$. 
Note that this is only a graded $\FF_3$-subvectorspace  and not an $\FF_9$-subvectorspace. 

The differentials on the right hand side are $\FF_9$-linear 
and are determined by (\ref{basic-diffs}) and the fact that $\alpha$, 
$\beta$ and $\zeta$ are obviously permanent cycles while $a_{35}$ is a permanent cycle by 
(\ref{N-smash-v1}). By using that $w=\omega^2v_2^{1/2}$ we get, up to sign, 
the following non-trivial differentials on the left hand side 

\begin{align}\label{G20diffs}
d_5(v_2^{(i+3)/2}) &=v_2^{(i-1)/2}\beta^2w\alpha,\ \ \ \ \ \ \ \ \ \ \ \  i\equiv 0,1,2,3,4,5\mod(9)\nonumber\\
d_5(v_2^{(i+3)/2}\beta a_{35})&=v_2^{(i-1)/2}\beta^2w\beta\alpha a_{35} ,
\qquad i\equiv 0,1,2,3,4,5\mod(9)\nonumber  \\
\\
d_5(v_2^{(i+3)/2}w\beta)&= v_2^{(i+1)/2}\alpha\beta^3,\qquad \ \ \ \ 
i+1\equiv 0,1,2,3,4,5\mod(9)\nonumber \\
d_5(v_2^{(i+3)/2}w\beta a_{35})&=v_2^{(i+1)/2}\alpha\beta^3a_{35},\qquad i+1\equiv 0,1,2,3,4,5\mod(9) \nonumber 
\end{align}
and
\begin{align}
d_9(v_2^{(i+6)/2}\alpha) &=\pm v_2^{i/2}\beta^5,\qquad \ \ \ \ \ \  i\equiv 0,1,2\mod(9) \nonumber\\
d_9(v_2^{(i+6)/2}\alpha a_{35}) &=\pm v_2^{1/2}\beta^5a_{35},\qquad \ \ i\equiv 0,1,2\mod(9) \nonumber\\ 
\\
d_9(v_2^{(i+5)/2}w\alpha) &=\pm v_2^{(i-1)/2}w\beta^5,\qquad \ \ \ \ i\equiv 0,1,2\mod(9) \nonumber\\
d_9(v_2^{(i+5)/2}w\beta \alpha a_{35}) &=\pm v_2^{(i-1)/2}w\beta^6 a_{35},
\qquad i\equiv 0,1,2\mod(9)\ .\nonumber
\end{align}
The final result is periodic with periodicity generator $v_2^{9/2}$
and are summarized in the following result in which the generators have been chosen such that they 
correspond to those given in the calculation of $\pi_*(L_{K(2)}V(1))$ in the main theorem of \cite{GHM}. 

\begin{thm}\label{Split-G20} 1) As a module over  
$S:=\FF_3[v_2^{\pm 9/2},\b]\otimes\La(\zeta)$ there is an isomorphism   
\begin{align}
\pi_*(E_2^{h\GG_2^0}\wedge V(1))\cong\  
& S/(\b^5)\{v_2^{l}\}_{l=0,1,5}\oplus S/(\b^3)\{v_2^{l}\a\}_{l=0,1,2,5,6,7}\nonumber\\
& \oplus\ S/(\b^4)\{v_2^{l}w\b\}_{l=0,4,5}
\oplus\ S/(\b^2)\{v_2^{l}w\a\}_{l=0,1,2,4,5,6}  \nonumber\\
& \oplus\ S/(\b^4)\{v_2^{l}\b a_{35}\}_{l=0,4,5}
\oplus\ S/(\b^3)\{v_2^{l}\a a_{35}\}_{l=0,1,2,5,6,7} \nonumber \\
& \oplus\ S/(\b^5)\{v_2^{l}w\b a_{35}\}_{l=0,4,5}
\oplus\ S/(\b^2)\{v_2^{l}w\b\a a_{35}\}_{l=0,1,2,4,5,6}  \ . \nonumber 
\end{align} 

2) There is an isomorphism of $\pi_*(L_{K(2}V(1))$-modules 
$$
\pi_*(E^{h\GG_2^0}\wedge V(1))\cong \pi_*(L_{K(2}V(1))\oplus v_2^{9/2}\pi_*(L_{K(2}V(1))  
$$
which is compatible with the splitting 
$$
E^{h\GG_2^0}\simeq (E^{h\GG_2^0})^{+}\vee (E^{h\GG_2^0})^{-}
=E^{h\GG_2}\vee (E^{h\GG_2^0})^{-}
$$ 
of $E^{h\GG_2^0}$ into the $(+1)$ and $(-1)$ eigenspectrum 
for the residual action of $C_2(\omega)$ on $E^{h\GG_2^0}$.  \qed 
\end{thm}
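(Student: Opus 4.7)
The plan is to run the homotopy fixed point spectral sequence
$$H^*(\GG_2^0,\FF_9[u^{\pm 1}]) \Longrightarrow \pi_*(E^{h\GG_2^0}\wedge V(1))$$
whose $E_2$-term is the $\FF_3[\beta,v_2^{\pm 1/2}]\otimes\La(\zeta)$-module on the eight generators of Theorem \ref{G2-cohom}(1). All $d_r$ with $r\leq 4$ vanish for bidegree reasons. The $d_5$ and $d_9$ differentials are precisely those displayed in (\ref{G20diffs}), which I would derive by naturality: the monomorphism $\widetilde\rho$ embeds this spectral sequence into the $C_2(\phi)$-fixed product spectral sequence for $E^{hN}\vee E^{h\omega N\omega^{-1}}$, whose differentials reduce via (\ref{N-smash-v1}) to the basic $d_5$ and $d_9$ of (\ref{basic-diffs}) on $E^{hG_{24}}\wedge V(1)$. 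Those target differentials are $\FF_9$-linear, so they descend to $\FF_3$-linear differentials on the image of $\widetilde\rho$, yielding (\ref{G20diffs}). No higher differentials can occur, because after $d_9$ the answer is $\beta$-bounded and sparse, with no bidegrees left to support a $d_r$ with $r\geq 11$.

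I would then read off $E_\infty=E_{10}$ against each of the eight generator streams in (\ref{gens-for-s0}). The $v_2^{1/2}$-exponent modulo $9$ bookkeeping in (\ref{G20diffs}) produces, on each stream, exactly three surviving cosets, with $\beta$-torsion exponents matching the listed $5,3,4,2,4,3,5,2$ and the periodicity class being $v_2^{9/2}$. Additive extensions are ruled out by Remark \ref{id-p-v1}, which forces $\pi_*(X\wedge V(1))$ to be an $\FF_3$-vector space. Hidden multiplicative extensions in $\beta$, $\alpha$ or $\zeta$ would contradict either the evident $S$-module structure visible on $E_\infty$ or the $v_2^{9/2}$-periodicity, and a short case check in the few bidegrees where two summands are adjacent excludes them.

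For part (2), the quotient $\GG_2/\GG_2^0\cong C_2(\omega)$ acts residually on $E^{h\GG_2^0}$; since $2$ is invertible on every $3$-local spectrum, this yields a splitting
$$E^{h\GG_2^0}\simeq (E^{h\GG_2^0})^+\vee (E^{h\GG_2^0})^-$$
with $(E^{h\GG_2^0})^+\simeq E^{h\GG_2}=L_{K(2)}S^0$. By Theorem \ref{G2-cohom}(3) this splitting realizes the $\pm 1$-eigenspace decomposition already present at the $E_2$-page, so $v_2^{1/2}$, and hence $v_2^{9/2}=(v_2^{1/2})^9$, lies in the $(-1)$-eigenspace. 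Since $v_2^{9/2}$ is a unit in the periodic homotopy $\pi_*(E^{h\GG_2^0}\wedge V(1))$, multiplication by it identifies the $(-1)$-eigenspace with $v_2^{9/2}\pi_*(L_{K(2)}V(1))$, producing the stated splitting of $\pi_*(L_{K(2)}V(1))$-modules.

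The main obstacle is the residue-class bookkeeping in step two: tracking how the mod-$9$ conditions in (\ref{G20diffs}) cut each of the eight generator streams into three surviving cosets with the correct $\beta$-exponent, and verifying that together they assemble into the eight cyclic $S$-summands named in the theorem. Everything else is essentially formal once Theorem \ref{G2-cohom} and the prior $E^{hG_{24}}$ and $E^{hN}$ calculations are in hand.
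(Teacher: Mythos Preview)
Your proposal is correct and follows essentially the same approach as the paper: the theorem is stated with \qed\ because its proof is the discussion in \S\ref{him-V1} immediately preceding it, namely running the homotopy fixed point spectral sequence for $\GG_2^0$, determining all differentials by naturality with respect to the injection $\widetilde\rho$ into the $E^{hN}$ spectral sequence (whose differentials come from those of $E^{hG_{24}}\wedge V(1)$ in (\ref{basic-diffs})), and then reading off the $v_2^{9/2}$-periodic answer. Your treatment of part (2) via the eigenspace splitting and Theorem~\ref{G2-cohom}(3) also matches the paper's reasoning.
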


The final result is displayed graphically in Figure 2 which describes  
the homotopy groups of $E_2^{h\GG_2^1}\wedge V(1)$ including the $\a$-multiplications 
determined by (\ref{a-mult}). Those of $L_{K(2)}V(1)$ 
are obtained by taking the tensor product with the exterior algebra on $\zeta$.  
Notice that $\pi_*(L_{K(2}V(1))$ is $144$-periodic  with periodicity generator $v_2^9$. 
This, by itself, does not imply
that $L_{K(2)}V(1)$ itself is 144-periodic as there is an obstruction to extending 
$v_2^9$ over $\Sigma^{144}L_{K(2)}V(1)$. 
The homotopy group $\pi_{149}L_{K(2)}V(1) \cong \ZZ/3$ is generated by a
class $y$ which in $E_6$ of the spectral sequence is given by $\zeta a_{35}w^{11}z_0$, 
where $z_0$ is as in (\ref{whatisxi}). 
We note that $\beta y \ne 0$. See Example \ref{periodicity}
below for more on this point.

We can apply this discussion to identify the homotopy groups
of the Brown-Comenetz dual  of $L_{K(2)}V(1)$. Since $L_{K(2)}V(1)$ is the localization
of a type 2 complex, $M_2V(1) = L_{K(2)}V(1)$; hence
$$
\pi_\ast I_2V(1) \cong \pi_\ast IL_{K(2)}V(1) \cong
\Hom(\pi_{-\ast}L_{K(2)}V(1),\QQ/\ZZ).
$$

In the sequel we will return to our convention of writing $V(1)$ for $L_{K(2)}V(1)$, 
leaving the localization implicit. 

The following result is also in \cite{BehTop06}.

\begin{thm}\label{dual-of-v1} 1.) The $K(2)$-local
spectrum $V(1)$ is Brown-Comenetz self-dual on homotopy groups; more precisely, there
is an isomorphism
$$
\pi_\ast \Sigma^{-28}V(1) \cong \pi_\ast I_2V(1)
$$
of modules over $\FF_3[\beta_1] \otimes \Lambda(\alpha_1)$.

2.) There is also an isomorphism
$$
\pi_\ast \Sigma^{-22}V(1) \cong \pi_\ast I_2\wedge V(1)
$$
of modules over $\FF_3[\beta_1] \otimes \Lambda(\alpha_1)$.
\end{thm}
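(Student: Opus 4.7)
The plan is to reduce both statements to a single algebraic self-duality statement for $\pi_\ast V(1)$, using formal Brown-Comenetz and Spanier-Whitehead duality to convert (1) and (2) into each other, and then to verify that self-duality by inspection of the explicit module structure provided by Theorem \ref{Split-G20}.

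\textbf{Formal reductions.} Because $V(1) = L_{K(2)}V(1)$ is a type $2$ complex, $L_1 V(1) = 0$ and hence $M_2 V(1) \simeq V(1)$, so that
$$I_2 V(1) = F(M_2 V(1), I) = F(V(1), I).$$
By Remark \ref{id-p-v1}, $\pi_\ast V(1)$ is an $\FF_3$-vector space, whence
$$\pi_k I_2 V(1) \cong \Hom_{\ZZ}\bigl(\pi_{-k}V(1),\QQ/\ZZ\bigr) \cong \Hom_{\FF_3}\bigl(\pi_{-k}V(1),\FF_3\bigr).$$
Next, the four cell complex $V(1)$ is Spanier-Whitehead self-dual up to suspension: the cell structure in degrees $0,1,5,6$ gives $DV(1) \simeq \Sigma^{-6}V(1)$ in the $K(2)$-local category. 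Therefore
$$I_2 V(1) = F(V(1),I_2) \simeq DV(1)\wedge I_2 \simeq \Sigma^{-6}\bigl(V(1)\wedge I_2\bigr),$$
so statements (1) and (2) are logically equivalent (the suspensions differ by exactly $6$, matching $-28+6=-22$). It then suffices to prove (1).

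\textbf{Main calculation.} To establish (1) I would exhibit a perfect pairing of $\FF_3[\beta_1]\otimes \Lambda(\alpha_1)$-modules
$$\pi_k V(1) \otimes_{\FF_3} \pi_{-k-28}V(1) \longrightarrow \FF_3.$$
Theorem \ref{Split-G20}(2) identifies $\pi_\ast V(1)$ with the $(+1)$-eigenspace of the residual $C_2(\omega)$-action on $\pi_\ast(E^{h\GG_2^0}\wedge V(1))$, so the explicit direct-sum decomposition in part (1) of that theorem pins down $\pi_\ast V(1)$ as a direct sum of cyclic $\FF_3[\beta_1]\otimes \Lambda(\alpha_1,\zeta)$-modules $\FF_3[\beta_1]/(\beta_1^{k_i})\{g_i\}$ with the $g_i$ at explicit bottom degrees. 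Since each factor $\FF_3[\beta_1]/(\beta_1^k)$ is automatically self-dual as an $\FF_3[\beta_1]$-module (with top generator mapping to the dual of the bottom), the pairing is produced summand-by-summand once one matches each summand of $\beta_1$-length $k$ supported in degrees $[d,\,d+12(k-1)]$ with a summand of the same length supported in the reflected range $[-28-d-12(k-1),\,-28-d]$. Compatibility with $\alpha_1$-multiplication is automatic once one checks this on the bottom-degree generators, using \eqref{a-mult} and the fact that a cyclic module of the form $C\cdot\alpha_1$ reflects to a cyclic module again of the form $C'\cdot\alpha_1$.

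\textbf{Main obstacle.} The real content of the argument is the bookkeeping: one must check that the $8$ families of generators in Theorem \ref{Split-G20}(1), after restriction to the $(+1)$-eigenspace and tensoring with $\Lambda(\zeta)$, pair off in matched sizes (length $5$ with length $5$, length $4$ with length $4$, length $3$ with length $3$, length $2$ with length $2$) and that the shift $l\mapsto -l-(\text{stuff})\pmod 9$ sends the listed index sets for $v_2^l$ to themselves, accounting for exactly the prescribed suspension $-28$ modulo the $144$-periodicity generator $v_2^9$. Once this matching is verified, the pairing extends $\FF_3[\beta_1]\otimes\Lambda(\alpha_1)$-linearly and is non-degenerate by construction, and part (2) follows from part (1) by the Spanier-Whitehead computation above.
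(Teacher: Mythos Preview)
Your proposal is correct and follows essentially the same route as the paper. The paper likewise first reduces (1) and (2) to each other via $I_2V(1)\simeq F(V(1),I_2)\simeq DV(1)\wedge I_2\simeq \Sigma^{-6}V(1)\wedge I_2$, and then establishes (1) by direct inspection of the explicit $\FF_3[\beta]\otimes\Lambda(\alpha,\zeta)$-module structure on $\pi_\ast V(1)\cong\Lambda(\zeta)\otimes\pi_\ast E^{h\GG_2^1}\wedge V(1)$; where you cite the decomposition of Theorem~\ref{Split-G20}, the paper simply points to the chart in Figure~2 and names the dual partner of $1$ as the class $\zeta a_{35}w^{-7}\beta^5\in\pi_{28}V(1)$.
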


\begin{proof} The two statements are equivalent, since
$$
I_2V(1) \simeq F(V(1),I_2) \simeq F(V(1),S^0) \wedge I_2 \simeq
\Sigma^{-6}V(1) \wedge I_2.
$$
Statement (1) follows from an inspection of Figure 2. Indeed, there is an isomorphism
$$
\pi_\ast V(1) \cong \Lambda(\zeta) \otimes \pi_\ast
E^{h\GG_2^1} \wedge V(1)
$$
where $\Lambda(\zeta)$ is an exterior algebra on a class in degree $-1$.
The homotopy groups of  $E^{h\GG_2^1} \wedge V(1)$ are displayed
in Figure 2 of the appendix. Inspection of this chart shows there is
an isomorphism
$$
\pi_\ast \Sigma^{-28} V(1) \cong \Hom (\pi_\ast V(1),\QQ/\ZZ)
$$
with respect to which $1 \in \pi_0V(1)$ corresponds to the class
labelled 
$$
\zeta a_{35}w^{-7}\beta^5
$$
in the conventions explained at the beginning of the appendix. This
class is in $\pi_{28}V(1)$.
\end{proof}

\subsection{\bf Creating $v_2$-self maps.} We would now like to give a simple criterion for
creating $K(2)$-local equivalences out of $V(1)$. 
The first result, distilled from far more subtle arguments given in
\cite{BehP}, is about extending homotopy classes over $V(1)$; it
works in the stable category in general. Let $W$ be the cofiber
of $v_1:S^4 \to V(0)$ and let $g:S^5 \to W$ be the map with cofiber
the unlocalized $V(1)$.

\begin{prop}\label{v1-maps} Let $X$ be a spectrum, not necessarily
$K(2)$-local, and let $y \in \pi_nX$ be of order $3$. Suppose the Toda 
bracket
$\langle y, 3, \alpha\rangle$ contains zero. 
Then there is an extension of $y$ to a map
$$
\bar{y}:\Sigma^n W \longr X
$$
and the composite
$$
\xymatrix{
S^{n+15} \rto^\beta & S^{n+5} \rto^g & \Sigma^n W \rto^{\bar{y}} & X
}
$$
is null-homotopic. In particular, if the multiplication
$$
\beta:\pi_{n+5} X \to \pi_{n+15}X
$$
is injective,  then $y$ extends to a map $\Sigma^n V(1) \to X$
\end{prop}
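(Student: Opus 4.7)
The plan is to extend $y\colon S^n\to X$ cell-by-cell along the filtration $S^n\subset \Sigma^n V(0)\subset \Sigma^n W\subset \Sigma^n V(1)$ corresponding to the cells of $V(1)$ in dimensions $0,1,5,6$.

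First, since $3y=0$, the cofiber sequence $S^n\xrightarrow{3}S^n\to\Sigma^n V(0)$ provides an extension $\tilde y\colon\Sigma^n V(0)\to X$; the set of such extensions is a torsor over $\pi_{n+1}X/3\pi_{n+1}X$. To extend $\tilde y$ further to $\bar y\colon\Sigma^n W\to X$, the obstruction is $\tilde y\circ\Sigma^n(v_1|_{S^4})\in\pi_{n+4}X$. The key observation is that $v_1|_{S^4}\colon S^4\to V(0)$ is a lift of $\alpha\colon S^4\to S^1$ through the projection $V(0)\to S^1$, so $\Sigma^n(v_1|_{S^4})$ is a legitimate lift of $\alpha$ in the formation of the Toda bracket $\langle y,3,\alpha\rangle$. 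Consequently $\tilde y\circ\Sigma^n(v_1|_{S^4})$ is a bracket representative, and as $\tilde y$ ranges over extensions of $y$ it sweeps out the coset modulo $\pi_{n+1}X\cdot\alpha$ (the other portion of the Toda indeterminacy, $y\cdot\pi_4 S^0$, vanishes since $\pi_4 S^0=0$). The hypothesis that $\langle y,3,\alpha\rangle$ contains zero therefore lets us choose $\tilde y$ with $\tilde y\circ\Sigma^n(v_1|_{S^4})=0$, producing the required $\bar y$.

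For the second conclusion I would prove the sharper claim that $g\circ\beta=0$ already holds in $\pi_{15}W$, whence $\bar y\circ g\circ\beta=0$ is automatic for any $\bar y$. This follows from the classical description of $\beta_1$ via the Smith--Toda $v_2$-self-map: the composite
\[
S^{16}\xrightarrow{\Sigma^{16}\iota}\Sigma^{16}V(1)\xrightarrow{v_2}V(1)\xrightarrow{p}S^6
\]
realizes $\beta_1\in\pi_{10}S^0=\pi_{16}S^6$, so $\beta$ has a preimage $v_2\circ\Sigma^{16}\iota\in\pi_{16}V(1)$ under the top-cell projection $p$. Applying $\pi_{16}$ to the cofiber sequence $W\to V(1)\to S^6\xrightarrow{\Sigma g}\Sigma W$ gives an exact sequence $\pi_{16}V(1)\to\pi_{16}S^6\xrightarrow{(\Sigma g)_*}\pi_{15}W$, and exactness forces $(\Sigma g)_*\beta = g\circ\beta=0$. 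The ``in particular'' clause is then immediate: if $\beta$ acts injectively on $\pi_{n+5}X$, the identity $\bar y\circ g\circ\beta=0$ forces $\bar y\circ g=0$, so $\bar y$ extends over $V(1)=W\cup_g e^6$.

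The main obstacle is the Toda bracket argument in the second step: one must carefully verify that $\Sigma^n(v_1|_{S^4})$ is a valid ``$\tilde\alpha$'' in the Toda bracket and that varying $\tilde y$ alone is enough to realize every element of the relevant indeterminacy coset (this uses in an essential way that $\pi_4 S^0=0$, so that the portion of the indeterminacy controlled by varying the lift of $\alpha$ is trivial). The third step, by contrast, is essentially a repackaging of the classical identification of $\beta_1$ with the top-cell image of $v_2$.
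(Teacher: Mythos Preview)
Your cell-by-cell extension to produce $\bar y:\Sigma^n W\to X$ is correct and more detailed than the paper, which treats this step as routine.

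The argument for $g\circ\beta=0$, however, has a genuine gap at the prime $3$. You produce a lift of $\beta_1$ to $\pi_{16}V(1)$ by invoking the self-map $v_2:\Sigma^{16}V(1)\to V(1)$, but no such map exists when $p=3$: the minimal $v_2$-periodicity of $V(1)$ at $p=3$ is $v_2^9$ (this is precisely the content of the Behrens--Pemmaraju paper cited in the text). The ``classical description of $\beta_1$'' you use is valid only for $p\geq 5$. Your exactness step would go through if you could independently exhibit a preimage of $\beta_1$ in $\pi_{16}V(1)$, but that is essentially equivalent to what is being proved.

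The paper avoids this by establishing the stronger statement $\pi_{15}W=0$ via the long exact sequence of the cofibration $S^4\xrightarrow{v_1}V(0)\to W$. One checks that $v_1:\pi_{11}S^0\to\pi_{15}V(0)$ is surjective (the generator $\alpha_{3/2}$ hits $v_1^3\alpha=\alpha_4$) and that $v_1:\pi_{10}S^0\to\pi_{14}V(0)$ is injective (since $v_1\beta_1=\pm\alpha\bar\beta\ne 0$). Hence $\pi_{15}W=0$, and $\bar y g\beta=0$ follows for any $\bar y$.
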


\begin{proof} The only statement we need to prove is that
$\bar{y}g\beta =0$. This will follow from the fact that 
$\pi_{15}W=0$. The relevant part of the long exact sequence for
computing this group is
$$
\xymatrix{
\pi_{11}S^0 \rto^{v_1}& \pi_{15}V(0) \rto &\pi_{15}W \rto &\pi_{10}S^0
\rto^{v_1} & \pi_{14} V(0)
}
$$
The group $\pi_{15}V(0)$ is $\ZZ/3$ generated by
$\alpha_4=v_1^3\alpha$ modulo $3$. But $\pi_{11}S^0 \cong \ZZ/9$
generated by $\alpha_{3/2}$ and $\alpha_{3/2} = v_1^2\alpha$ modulo $3$.
The group $\pi_{10}S^0$ is generated by $\beta = \beta_1$ and
modulo $3$, $v_1\beta_1 \ne 0$; in fact $v_1\beta = \pm \alpha
\bar{\beta}$ where $\bar{\beta}$ maps to $\beta$ under the pinch map
$V(0) \to S^1$. 
\end{proof}

Proposition \ref{v1-maps} now extends to a result about equivalences out of $V(1)$.

\begin{thm}\label{recon-v1} Let $X$ be a $K(2)$-local spectrum with a chosen
isomorphism of twisted $\GG_2$-modules
$$
g: E_\ast V(1) \cong E_\ast X.
$$
Let $\iota_X \in H^0(\GG_2,E_0 X)$ be the image of the generator
of $H^0(\GG_2,E_0V(1))$ under the map induced by $g$. Suppose that
\begin{enumerate}

\item $\iota_X$ is a permanent cycle in the Adams-Novikov Spectral Sequence and 
detects an element of order $3$;

\item $\pi_4 X = 0$;

\item multiplication by $\beta$ is monomorphic on $\pi_5 X$.  
\end{enumerate}
Then $g$ can be realized by a $K(2)$-local equivalence
$$
f:V(1) \longr X.
$$
\end{thm}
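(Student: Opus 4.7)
The plan is to extend an element of $\pi_0 X$ detected by $\iota_X$ to a map out of $V(1)$ using Proposition \ref{v1-maps}, and then to verify via Morava $E$-theory that this extension is a $K(2)$-local equivalence.

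First I extract the starting map. By hypothesis (1), $\iota_X$ is a permanent cycle in the Adams-Novikov spectral sequence detecting an element $y \in \pi_0 X$ of order $3$. I check the hypotheses of Proposition \ref{v1-maps} for $y$ with $n=0$. The Toda bracket $\langle y, 3, \alpha\rangle$ lies in $\pi_4 X$, which vanishes by hypothesis (2); in particular it contains zero. The proposition therefore produces an extension $\bar{y}: W \to X$ of $y$ across the middle cell. To promote $\bar{y}$ across the top cell of $V(1)$, the proposition requires that $\beta : \pi_5 X \to \pi_{15} X$ be injective, which is precisely hypothesis (3). I thereby obtain a map $f : V(1) \to X$ extending $y$.

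It remains to show $f$ is a $K(2)$-local equivalence, for which it suffices to prove that the induced map $f_\ast : E_\ast V(1) \to E_\ast X$ is an isomorphism of twisted $\GG_2$-modules. Since $E_\ast V(1) = E_\ast/(3,u_1)$ is a cyclic $E_\ast$-module generated by $1 \in E_0 V(1)$, the map $f_\ast$ is completely determined by the value $f_\ast(1) \in E_0 X$. Because the restriction of $f$ to the bottom cell is $y$, and $y$ is detected by $\iota_X = g(1)$ in Adams-Novikov filtration zero, the image $f_\ast(1)$ is exactly $g(1)$; no lower-filtration indeterminacy can occur because $(0,0)$ is already the minimal bidegree. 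By cyclicity, $f_\ast$ and $g$ agree on the generator and hence are equal, so $f_\ast = g$ is the given isomorphism. Therefore $f$ induces an isomorphism on Morava $E$-theory and, both $V(1)$ and $X$ being $K(2)$-local, is a $K(2)$-local equivalence. The only mildly subtle point in this argument is the identification $f_\ast(1) = g(1)$; everything else is a mechanical application of Proposition \ref{v1-maps} together with hypotheses (2) and (3).
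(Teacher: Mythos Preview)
Your argument is correct and follows the same route as the paper: use hypothesis (1) to get a map $S^0 \to X$ of order $3$, invoke Proposition \ref{v1-maps} via hypotheses (2) and (3) to extend over $V(1)$, and then check on $E_\ast$ that the result realizes $g$. The paper compresses the last step to the phrase ``by construction, realizes $g$''; you have spelled out why, using cyclicity of $E_\ast V(1)$ and the identification of the Hurewicz image of $y$ with $\iota_X = g(1)$, which is a welcome clarification.
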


\begin{proof} Let $S^0 \to X$ be the map of order $3$ detected by $\iota_X$. 
Then  Proposition \ref{v1-maps} applies and yields a map $f:V(1) \to X$
which, by construction, realizes $g$. Since $g$ is an isomorphism, 
$f$ is a weak equivalence, after localization.
\end{proof}

\begin{exam}\label{periodicity} Even before $K(2)$-localization,  the spectrum $V(1)$
has a $v_2^9$-self map \cite{BehP}; hence the localized $V(1)$
is $144$-periodic. As an application of Theorem
\ref{recon-v1} we can give a simple proof of the second fact. Consider the isomorphism 
$$
v_2^9:E_\ast  V(1) \to E_\ast  \Sigma^{-144} V(1).
$$
Thus we see $v_2^9$ extends to a self equivalence
$$
v_2^9:\Sigma^{144}V(1) \longr V(1)
$$
and the localized $V(1)$ is 144-periodic.
\end{exam}

\begin{rem}\label{homo-Sdet-v1} Theorem \ref{dual-of-v1} does  not
quite yield a weak equivalence $\Sigma^{-28}V(1) \simeq I_2  V(1)$,
as Theorem \ref{recon-v1} doesn't immediately apply.
We have that
\begin{align*}
E_\ast I_2 V(1) &\cong E_\ast \Sigma^{-6}V(1) \otimes_{E_\ast} E_\ast I_2\\
&\cong \Sigma^{-4}E_\ast V(1) \otimes_{E_\ast} E_\ast \langle\det\rangle \\
& \cong \Sigma^{4} E_\ast V(1) \cong \Sigma^{-28}E_\ast V(1)
\end{align*}
since $E_\ast \langle\det\rangle/(3,v_1) \cong \Sigma^8 E_\ast V(1)$ as
twisted $\GG_2$-modules. (See Lemma \ref{homo-Sdet-v1-extra} for more
details.)
Thus we have a candidate for a permanent
cycle in $H^0(\GG_2,E_{-28}V(1))$, but we'd need to check that  this is 
a permanent cycle and detects the element specified in the previous proof.

We will see in Corollary \ref{dual-v1-real} that indeed we have
$\Sigma^{-22}V(1) \simeq I_2\wedge V(1)$ and, hence,  $\Sigma^{-28}V(1) \simeq I_2  V(1)$.
\end{rem}

\section{The centralizer resolution}

The results of this section are a revisiting of work of the second author from \cite{HennRes}.
The aim is to be able to compare and contrast algebraic and topological information
for some of our calculations in the next section.

Let $\Z_3[[\GG_2^1]]$ be the completed group ring of the group $\GG_2^1$.  
By Theorem 10 of \cite{HennRes} there is an algebraic resolution of the trivial 
$\Z_3[[\GG_2^1]]$-module 
\begin{align}\label{cent-res}
0\to C_3\to C_2\to C_1\to C_0\to \Z_3\to 0 
\end{align}
with 

\begin{enumerate}
\item $C_0=\Z_3[[\GG_2^1/G_{24}]]$, 

\item $C_1=\ZZ_3[[\GG_2^1]]\otimes_{\ZZ_3[SD_{16}]}\chi 
\oplus \ZZ_3[[\GG_2^1]]\otimes_{\ZZ_3[G_{24}]}\tilde{\chi}$   

\noindent 
where $\chi$ is the $\Z_3[SD_{16}]$-module whose underlying $\Z_3$-module is $\Z_3$  
and on which $\omega$ and $\phi$ act by $-1$, and where $\tilde{\chi}$ is the $\Z_3[G_{24}]$ module  
whose underlying $\Z_3$-module is $\Z_3$  
and on which $\omega^2$ acts by sign and both $a$ and $\omega\phi$ act trivially,

\item $C_2= \ZZ_3[[\GG_2^1]]\otimes_{\ZZ_3[SD_{16}]} \lambda$ 

\noindent
where $\l$ is the $\Z_3[SD_{16}]$-module whose underlying $\Z_3$-module is $\WW$ 
and on which $\omega$ acts by multiplication with $\omega^2$ and $\phi$ acts by Frobenius, 

\item $C_3=\ZZ_3[[\GG_2^1/SD_{16}]]$. 

\end{enumerate}

We get a slightly larger resolution of $\ZZ_3$ as $\GG_2$-module by using the 
decomposition $\ZZ_3 \times \GG_2^1 \cong \GG_2$ of (\ref{GG2-splitting})  
and then tensoring the resolution
(\ref{cent-res}) with the small resolution
$$
0 \to \ZZ_3[[\ZZ_3]] \longr \ZZ_3[[\ZZ_3]] \longr \ZZ_3 \to 0\ .
$$

This leads to a small spectral sequence 
$$
E_1^{p,q}(\GG_2^1)=E_1^{p,q}=\Ext_{\Z_3[[\GG_2^1]]}^q(C_p,\FF_9[u^{\pm 1}]) 
\Longrightarrow H^{p+q}(\GG^1_2,\FF_9[u^{\pm 1}]) \ . 
$$ 

By Shapiro's lemma and by using that the modules $\chi$, $\tilde{\chi}$ and $\l$ are self-dual 
this $E_1$-term can be rewritten as  
\begin{align*}
E_1^{0,q} &\cong  H^q(G_{24},\FF_9[u^{\pm 1}])b_0\\
E_1^{1,q} &\cong \Ext_{\Z_3[G_{24}]}^q(\tilde{\chi},\FF_9[u^{\pm 1}])\oplus 
\Ext_{\Z_3[SD_{16}]}^q(\chi,\FF_9[u^{\pm 1}])\\
&\cong H^q(G_{24},\FF_9[u^{\pm 1}]\otimes\tilde{\chi})\oplus 
H^q(SD_{16},\FF_9[u^{\pm 1}]\otimes \chi) \\
& \cong H^q(G_{24},\FF_9[u^{\pm 1}])b_{36} \oplus H^q(SD_{16},\FF_9[u^{\pm 1}])e_8\\ 
E_1^{2,q} & \cong \Ext_{\Z_3[SD_{16}]}(\l,\FF_9[u^{\pm 1}])\cong 
H^q(SD_{16},\FF_9[u^{\pm 1}]\otimes \l) \\
&\cong H^q(SD_{16},\FF_9[u^{\pm 1}])e_{36} \oplus H^q(SD_{16},\FF_9[u^{\pm 1}])e_{44}\\ 
E_1^{3,q} & \cong H^q(SD_{16},\FF_9[u^{\pm 1}])e_{48}
\end{align*} 
where $Rc_n$ means a free module over $R$ on a generator of degree $n$. 
In fact, we have 
$$
H^*(SD_{16},\FF_9[u^{\pm 1}])\cong \FF_3[v_2^{\pm 1}]\ , 
$$ 
in particular this cohomology is concentrated in cohomological dimension $0$ and it is $16$-periodic.  
Hence the degree of the generator $e_{48}$ could be changed by 
any multiple of $16$. Likewise the degree of the generators $e_8$, $e_{36}$ and $e_{44}$ could be 
changed by any multiple of $16$. By (\ref{E2-G24-V(1)}) we have 
$$
H^*(G_{24},\FF_9[u^{\pm 1}])\cong \FF_3[w^{\pm 1},\beta]\otimes \La(\alpha)
$$ 
and the degrees of the generators $b_0$ and $b_{36}$ could be changed by  any multiple of $8$. 
We have made our choices in order to simplify some of the formulas below. Note that the element $b_{36}$ corresponds to the element $a_{35}$ in section 3. 

This spectral sequence must converge towards the result described in Theorem \ref{G2-cohom}.b 
with the exterior algebra on $\zeta$ removed. This implies that  the only non-trivial 
differentials in this spectral sequence are, up to sign,
\begin{align}\label{alg-diffs}
d_1(w^{2i+1}b_0) &=  v_2^ie_8\nonumber\\
d_1(v_2^ib_{36}) &= v_2^ie_{36}\\
d_1(w^{2i+1}b_{36}) &=  v_2^ie_{44}\nonumber\\
d_2(\alpha w^{2i+1} b_{36}) &=  v_2^i e_{48}\nonumber
\end{align}
Note that  $E_3^{p,q} = 0$ for $p > 1$. There is a similar spectral sequence
for $H^\ast(\GG_2,\FF_9[u^{\pm 1}])$ with 
$$
E_1(\GG_2) \cong E_1(\GG_2^1) \otimes \Lambda(\zeta)
$$
and for which all differentials are determined by the fact that this is part of a splitting of 
spectral sequences and $a_0 \otimes \zeta \in E(\GG_2)_{(1,0)}$ is a permanent cycle.

By Theorem 11 of \cite{HennRes} these algebraic resolutions have topological resolutions 
as sequences of spectra. Note that $E^{hSD_{16}}$ is $16$-periodic and as above 
we have altered the suspensions of the original reference \cite{HennRes} by a multiple of $16$
in order to simplify some formulas below.
$$ 
E^{h\GG_2^1}\rightarrow \begin{array}{c} E^{hG_{24}} \end{array}
\rightarrow \begin{array}{c} \Sigma^8 E^{hSD_{16}} \\
\vspace{.05in} 
\vee \\ \vspace{.05in} \Sigma^{36}E^{hG_{24}} \end{array} \rightarrow
\begin{array}{c} \Sigma^{36} E^{hSD_{16}} \\ \vspace{.05in} \vee \\
\vspace{.05in} \Sigma^{44} E^{hSD_{16}}  \end{array} \rightarrow 
\begin{array}{c}
\Sigma^{48} E^{hSD_{16}} \end{array}
$$
and
$$ 
L_{K(2)}S^0 \rightarrow \begin{array}{c} E^{hG_{24}} \end{array}
\rightarrow \begin{array}{c} \Sigma^8 E^{hSD_{16}} \\
\vspace{.05in} \vee \\ \vspace{.05in} E^{hG_{24}} \\ \vspace{.05in}
\vee \\ \vspace{.05in} \Sigma^{36}E^{hG_{24}} \end{array} \rightarrow
\begin{array}{c} \Sigma^{36} E^{hSD_{16}} \\ \vspace{.05in} \vee \\
\vspace{.05in} \Sigma^{8} E^{hSD_{16}} \\ \vspace{.05in} \vee \\
\vspace{.05in} \Sigma^{44} E^{hSD_{16}} \\ \vspace{.05in} \vee \\
\vspace{.05in} \Sigma^{36}E^{hG_{24}} \end{array} \rightarrow 
\begin{array}{c}
\Sigma^{48}E^{hSD_{16}} \\ \vspace{.05in} \vee \\ \vspace{.05in}
\Sigma^{36} E^{hSD_{16}} \\ \vspace{.05in} \vee \\ \vspace{.05in}
\Sigma^{44} E^{hSD_{16}} \end{array}\rightarrow
\begin{array}{c}
\Sigma^{48}E^{hSD_{16}}\ . \end{array}
$$
All compositions and all Toda brackets are zero modulo indeterminacy and, thus,
these resolutions can be refined to towers of fibrations with $E^{h\GG_2^1}$ or
$L_{K(2)}S^0$ at the top, as needed. Thus if $Y$ is a $K(2)$-local spectrum,
there is a tower of fibrations
\begin{equation}\label{sphere-tower}
\xymatrix{
Y \rto & Y_3 \rto & Y_2 \rto & Y_1 \rto& E^{hG_{24}}\wedge Y\\
\Sigma^{-4}F_4 \wedge Y \uto &\Sigma^{-3}F_3 \wedge Y  \uto
& \Sigma^{-2}F_2 \wedge Y  \uto &
\Sigma^{-1}F_1 \wedge Y\uto
}
\end{equation}
with
$$
F_1 = \Sigma^8 E^{hSD_{16}} \vee E^{hG_{24}} \vee  \Sigma^{36}E^{hG_{24}}
$$
and so on for $F_2$, $F_3$, and $F_4$.

This leads to a spectral sequence $E_1^{s,t} \Longrightarrow \pi_{t-s}Y$
with
\begin{align*}
E_1^{0,t} &= \pi_tE^{hG_{24}}\wedge Y\\
E_1^{1,t} &= \pi_t\Sigma^{8}E^{hSD_{16}} \wedge Y
\oplus\pi_tE^{hG_{24}}\wedge Y
\oplus \pi_t\Sigma^{36}E^{hG_{24}}\wedge Y\\
E_1^{2,t} &=\pi_t\Sigma^{36}E^{hSD_{16}} \wedge Y
\oplus\pi_t\Sigma^{8}E^{hSD_{16}}\wedge Y
\oplus \pi_t\Sigma^{44}E^{hSD_{16}} \wedge Y
\oplus\pi_t\Sigma^{36}E^{hG_{24}}\wedge Y\\
E_1^{3,t} &= \pi_t\Sigma^{48}E^{hSD_{16}} \wedge Y
\oplus \pi_t\Sigma^{36}E^{hSD_{16}} \wedge Y
\oplus\pi_t\Sigma^{44}E^{hSD_{16}}\wedge Y\\
E_1^{4,t} &= \pi_t\Sigma^{48}E^{hSD_{16}} \wedge Y
\end{align*}

\begin{rem}\label{diffs-short} While we won't use this information below, it is worth noting
that it is a simple matter to calculate the differentials in the spectral sequence of this 
tower, at least for $Y=V(1)$. If, as before, $N$ denotes the normalizer of our elemnt $a$ 
in $\SS_2$ then there is a 
resolution of the trivial $N$-module $\ZZ_3$ as
\begin{equation}\label{N-res}
0 \to \ZZ_3[[N]] \otimes_{\ZZ_3[G_{12}]} \tilde{\chi} \to
\ZZ_3[[N]] \otimes_{\ZZ_3[G_{12}]} \tilde{\chi}
\oplus \ZZ_3[[N/G_{12}]] \to \ZZ_3[[N/G_{12}]] \to \ZZ_3
\end{equation}
which translates into a short tower of fibrations
\begin{equation}\label{N-tower}
\xymatrix{
E^{hN} \wedge Y \rto & Z_1 \rto & E^{hG_{12}} \wedge Y\\
\Sigma^{34} E^{hG_{12}} \wedge Y \uto & \Sigma^{35} E^{hG_{12}} \wedge Y \vee 
\Sigma^{-1} E^{hG_{12}} \wedge Y \uto
}
\end{equation}
There is an obvious projection from the tower (\ref{sphere-tower}) to the tower (\ref{N-tower})
calculating the map $Y \to E^{hN} \wedge Y$; algebraically, this is given by the
evident projection on resolutions. In the case
$Y=V(1)$, the topological spectral sequence for $\pi_\ast E^{hN} \wedge V(1)$ collapses
by Propositions 14 and 17 of \cite{GHM}. This and the differentials of
(\ref{alg-diffs}) determine all differentials in the
topological spectral sequence for $V(1)$ itself. Notice that because there are differentials in
the homotopy fixed point spectral sequence
$$
H^\ast(G_{24},\HV1) \Longrightarrow \pi_\ast E^{hG_{24}}\wedge V(1)
$$
the homotopy gets assembled slightly differently than might be predicted by the spectral 
sequence above for $H^\ast(\GG_2^1,\FF_9[u^{\pm 1}])$.  
\end{rem}

Our main use of these topological resolutions is the following result.

\begin{thm}\label{more-v1s} Let $Y$ be a spectrum so that there are equivalences
$$
E^{hG_{24}}\wedge V(1) \simeq E^{hG_{24}} \wedge Y 
\qquad\mathrm{and}\qquad  E^{hSD_{16}}\wedge V(1) \simeq E^{hSD_{16}} \wedge Y.
$$
Let $\iota_Y \in \pi_0E^{hG_{24}}\wedge Y$ be the image of
$1 \in \pi_0E^{hG_{24}}\wedge V(1)$ under the isomorphism 
$$
\pi_0 E^{hG_{24}}\wedge V(1) \cong \pi_0E^{hG_{24}} \wedge Y.
$$
If $\iota_Y$ survives
to a class $\epsilon_Y:S^0 \to Y$ of order $3$,
then $\epsilon_Y$ extends to a $K(2)$-local equivalence
$$
\bar{\epsilon}_Y:V(1) \longr Y.
$$
\end{thm}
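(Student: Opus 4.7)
The plan is to combine Proposition \ref{v1-maps} with the centralizer tower (\ref{sphere-tower}): first I would construct a candidate extension $\bar{\epsilon}_Y \colon V(1) \to Y$, then verify it is an equivalence.

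Step one pins down the low-dimensional homotopy of $Y$. Applying the tower (\ref{sphere-tower}) to $Y$ gives a spectral sequence converging to $\pi_\ast Y$ whose $E_1$-term is built entirely from $\pi_\ast E^{hG_{24}} \wedge Y$ and $\pi_\ast E^{hSD_{16}} \wedge Y$. By hypothesis, both of these agree with the corresponding groups for $V(1)$: $\pi_\ast E^{hG_{24}} \wedge Y \cong \pi_\ast E^{hG_{24}} \wedge V(1)$ as computed in Figure 1, and $\pi_\ast E^{hSD_{16}} \wedge Y \cong \FF_3[v_2^{\pm 1}]$ is concentrated in degrees divisible by $16$ (as follows from $H^\ast(SD_{16},\FF_9[u^{\pm 1}]) \cong \FF_3[v_2^{\pm 1}]$). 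A degree-by-degree check of the $E_1^{s, s+4}$ and $E_1^{s, s+5}$ columns of this spectral sequence shows that $\pi_4 Y = 0$ and that multiplication by $\beta$ is injective on $\pi_5 Y$.

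With these facts, the Toda bracket $\langle \epsilon_Y, 3, \alpha \rangle \subset \pi_4 Y$ automatically contains zero, so Proposition \ref{v1-maps} applies and produces the desired extension $\bar{\epsilon}_Y \colon V(1) \to Y$ of $\epsilon_Y$. To show $\bar{\epsilon}_Y$ is an equivalence, I would appeal once more to the centralizer tower applied to $L_{K(2)} S^0$: it suffices to verify that $E^{hG_{24}} \wedge \bar{\epsilon}_Y$ and $E^{hSD_{16}} \wedge \bar{\epsilon}_Y$ are both equivalences. For $F = G_{24}$, the induced map $E^{hG_{24}} \wedge V(1) \to E^{hG_{24}} \wedge Y$ sends the canonical generator $1 \in \pi_0$ to $\iota_Y$, which by definition of $\iota_Y$ corresponds to $1$ under the given equivalence; composing with the inverse of that equivalence yields an $E^{hG_{24}}$-module self-map of $E^{hG_{24}} \wedge V(1)$ acting as the identity on $\pi_0$, hence an equivalence. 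A parallel argument for $F = SD_{16}$ uses that the image of $\epsilon_Y$ in $\pi_0 E^{hSD_{16}} \wedge Y \cong \FF_3$ is nonzero, which one verifies by further mapping to $E_0 Y$ where the image generates.

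The main obstacle is the bookkeeping in step one: the $E_1$-page of the tower spectral sequence receives contributions from shifted summands such as $\Sigma^{36} E^{hG_{24}}$, $\Sigma^{44} E^{hSD_{16}}$, and $\Sigma^{48} E^{hSD_{16}}$, and a careful case analysis is needed to identify which contribute nontrivially in low total degree and which differentials (compatible with those for $V(1)$) kill unwanted classes. A secondary subtlety is justifying the claim that a $\pi_0$-identity $E^{hF}$-module self-map of $E^{hF} \wedge V(1)$ is an equivalence: this should follow from the $E^{hF}$-module structure on the endomorphism spectrum combined with the explicit structure of $\pi_\ast E^{hF} \wedge V(1)$, but may require a slightly more refined argument than just inspection on $\pi_0$.
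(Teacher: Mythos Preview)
Your approach is essentially the paper's: use the tower spectral sequence to verify the hypotheses of Proposition \ref{v1-maps}, extend $\epsilon_Y$ to $\bar{\epsilon}_Y$, then check it is an equivalence via the fixed-point spectra. Two points of comparison are worth noting.

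For the equivalence step, the paper is more economical than your two-pronged check: it observes that $E$ is an $E^{hG_{24}}$-module spectrum, so once the square
\[
\xymatrix{
V(1) \ar[r]^{\bar{\epsilon}_Y} \ar[d] & Y \ar[d] \\
E^{hG_{24}}\wedge V(1) \ar[r]_\simeq & E^{hG_{24}}\wedge Y
}
\]
commutes, applying $E \wedge_{E^{hG_{24}}}(-)$ shows $E_\ast \bar{\epsilon}_Y$ is an isomorphism. No separate $SD_{16}$ argument is needed, and your flagged ``secondary subtlety'' disappears.

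For the low-degree homotopy, your parenthetical phrase ``compatible with those for $V(1)$'' is the one place to be careful: the differentials in the tower spectral sequence for $Y$ are \emph{not} known a priori to agree with those for $V(1)$, since only the $E_1$-pages match. The paper's argument is differential-agnostic. One finds that the only possible nonzero contribution to $\pi_5 Y$ is a single $\ZZ/3$ detected by $\zeta\alpha v_2^{-2} a_{35}$, and both it and its $\beta$-multiple $\zeta\alpha\beta v_2^{-2} a_{35}$ are permanent cycles for degree reasons. The only differentials that could kill them are
\[
d_1(\alpha\beta^i v_2^{-2} a_{35}) = c_i\,\zeta\alpha\beta^i v_2^{-2} a_{35}, \qquad i=0,1,
\]
and $\beta$-linearity forces $c_0 = c_1$. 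Hence either both classes survive or both die, and in either case multiplication by $\beta$ is injective on $\pi_5 Y$. This is exactly the ``careful case analysis'' you anticipated, and it is short once the relevant class is identified.
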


\begin{proof} We will use Proposition \ref{v1-maps}  to produce
a commutative square
$$
\xymatrix{
V(1)  \rto^{\bar{\epsilon}_Y} \dto & Y\dto \\
E^{hG_{24}} \wedge V(1) \rto_\simeq & E^{hG_{24}} \wedge Y
}
$$
where the bottom map is the given weak equivalence and both vertical maps are the natural
maps given by the unit $S^0 \to E^{hG_{24}}$. Since $E$ is a $E^{hG_{24}}$-module,
it will follow that $E_\ast \bar{\epsilon}_Y$ is an isomorphism and that $\bar{\epsilon}_Y$
is a $K(2)$-equivalence.

By hypothesis we have a commutative square
$$
\xymatrix{
S^0  \rto^{\epsilon_Y} \dto & Y\dto \\
E^{hG_{24}} \wedge V(1) \rto_\simeq & E^{hG_{24}} \wedge Y.
}
$$
Using the calculation of
$\pi_\ast E^{hG_{24}}$ displayed in Figure 1, the fact that $\pi_\ast E^{hSD_{16}} \wedge V(1)
\cong \FF_3[v_2^{\pm 1}]$ and the spectral sequence of the tower above, we see that
$\pi_4 Y = 0$ and $\pi_5 Y$ is either zero or isomorphic to $\ZZ/3$ on a class which
supports a non-zero $\beta$ multiplication. In the nomenclature above 
(with $b_{36}$ changed to $a_{35}$) this class is detected by 
$$
\zeta\a v_2^{-2}a_{35} 
$$
and both this class and $\zeta\a\b v_2^{-2}a_{35}$ must be permanent cycles. There
is a possibility, just based on degree considerations, that either of the two could be hit
by a differential; however we see that the only possible such differentials are
$$
d_1(\a\beta^iv_2^{-2}a_{35}) = c_i \zeta \a\beta^iv_2^{-2}a_{35},\qquad i=0,1
$$
for some $c_i \in \ZZ/3$. Then $\beta$-multiplication implies that $c_0 = c_1$. 
\end{proof}

\section{Exotic elements of the Picard group and $V(1)$} 

In the Picard group of weak equivalence classes of $K(2)$-local spectra there is a subgroup
$\kappa_2$ of elements $X$ so that $E_\ast X \cong E_\ast = E_\ast S^0$ as twisted
$\GG_2$-modules; these are the {\it exotic elements} of the Picard group.
In \cite{Picat3}, we computed this group to be $(\ZZ/3)^2$. This extended work of 
Kamiya and Shimomura (\cite{shiPic}).  Here we
briefly review those results and then discuss $X \wedge V(1)$ for various $X \in\kappa_2$.

If $X \in \kappa_2$, then we have an Adams-Novikov Spectral Sequence
$$
H^\ast (\GG_2,E_\ast) \mathop{\longr}_{\cong}^{f_\ast} H^\ast(\GG_2,E_\ast X) \Longrightarrow
\pi_\ast X
$$
where $f_\ast$ is determined by the chosen isomorphism $E_\ast \cong E_\ast X$. If 
we let $\iota_X$ be the image of $1 \in H^0(\GG_2,E_0)$, then $X\simeq L_{K(2)}S^0$
if and only if $\iota_X$ is a permanent cycle. Shimomura had noticed that the only
possible non-zero differential is $d_5$ and, in effect, calculated
$H^5(\GG_2,E_4)\cong (\ZZ/3)^2$. Kamiya and 
Shimomura further showed that some of the elements could
be realized as targets of differentials from an element in $\kappa_2$; we completed this
task. The following result is Remark 3.2 of \cite{Picat3}. 

\begin{lem}\label{reduction} The reduction map
$$
r_\ast:H^5(\GG_2,E_4) \longr H^5(\GG_2,\FF_9[u^{\pm 1}]_4)
$$
is an injection and the image is generated by the elements $v_2^{-2}\b^2w\a$ 
and $\zeta \beta v_2^{-3}\a a_{35}$. 
\end{lem}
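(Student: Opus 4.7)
The plan is to identify the target $H^5(\GG_2, \FF_9[u^{\pm 1}]_4)$ explicitly, then exhibit lifts of the two named classes to $H^5(\GG_2, E_4)$, and finally deduce both claims by a dimension count against the known rank of the source.

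By Theorem~\ref{G2-cohom}(2), $H^*(\GG_2, \FF_9[u^{\pm 1}])$ is free as a module over $\FF_3[\b, v_2^{\pm 1}] \otimes \La(\zeta)$ on the eight listed generators. The first step is to enumerate the monomials $\zeta^\e \b^i v_2^j g$ (with $g$ one of the eight generators) in bidegree $(5,4)$. This is a short finite check whose only surviving terms are $v_2^{-2}\b^2 w\a$ (from the generator $w\a$), $\zeta \b v_2^{-3} \a a_{35}$ (from $\a a_{35}$), and a third ``extra'' class $v_2^{-4}\b^2 w a_{35}$ (from $w\b a_{35}$). Thus the target is a three-dimensional $\FF_3$-vector space spanned by these three classes, the first two of which are the ones named in the lemma.

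Shimomura's calculation recalled in the introduction to this section identifies $H^5(\GG_2, E_4)$ as a two-dimensional $\FF_3$-vector space. Therefore, injectivity of $r_*$ is equivalent to showing that its image has $\FF_3$-dimension exactly $2$, and this reduces to producing lifts of $v_2^{-2}\b^2 w\a$ and $\zeta \b v_2^{-3} \a a_{35}$ to $H^5(\GG_2, E_4)$. I would construct such lifts using the two Bockstein spectral sequences associated to the short exact sequences
\[
0 \to E_* \overset{3}{\to} E_* \to E_*/3 \to 0
\quad \text{and} \quad
0 \to E_*/3 \overset{u_1}{\to} E_*/3 \to \FF_9[u^{\pm 1}] \to 0,
\]
which together detect precisely the obstructions to lifting a class from $\FF_9[u^{\pm 1}]$-coefficients to $E_*$-coefficients. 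Granted the lifts, the image contains two linearly independent classes, so has $\FF_3$-dimension at least $2$; combined with the source having dimension exactly $2$, this forces $r_*$ to be injective with image equal to the $\FF_3$-span of the two named classes.

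The main obstacle is the Bockstein verification: a concrete cocycle-level calculation, most naturally carried out using the centralizer resolution of Section~4 together with explicit lifts of $\a$, $\b$, $\zeta$ and $a_{35}$ (which come respectively from classes in $\pi_*L_{K(2)}S^0$ and $\pi_*E^{hN^1}$) and from $v_2 = u^{-8} \in E_{16}$, to check that the two named products survive both Bocksteins while $v_2^{-4}\b^2 w a_{35}$ supports a nontrivial Bockstein differential. The trickier of the two named lifts is $v_2^{-2}\b^2 w\a$, since it involves the class $w = \omega^2 u^{-4}$ which is only $D_8$-invariant modulo $(3, u_1)$ and so has no obvious integral lift as a standalone $H^0$-class; one must show that the product with $\b^2\a$ kills the obstruction even though $w$ itself does not.
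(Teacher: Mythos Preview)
The paper gives no proof of this lemma; it simply cites Remark~3.2 of \cite{Picat3}. So there is no in-paper argument to compare your proposal against.

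Your outline is sound in structure. The enumeration of $H^5(\GG_2,\FF_9[u^{\pm 1}]_4)$ from Theorem~\ref{G2-cohom}(2) is correct and yields exactly the three classes you name; combining this with Shimomura's identification $H^5(\GG_2,E_4)\cong(\ZZ/3)^2$ (recalled in the paragraph immediately preceding the lemma) does reduce the statement to showing that the two named classes lie in the image of $r_\ast$. One minor point: once those two independent classes are shown to lift and the source has rank two, the image is determined; checking separately that $v_2^{-4}\b^2 w a_{35}$ supports a nontrivial Bockstein is then redundant.

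That said, your proposal is explicitly incomplete. You identify the Bockstein verification as ``the main obstacle'' and only sketch how it would proceed, without carrying it out. As written, this is a correct reduction of the problem rather than a proof; the actual verification of the lifts is precisely what \cite{Picat3} supplies.
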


Thus if $X \in \kappa_2$ we may write
\begin{align}\label{diff-for-1}
d_5(\iota_X) &= [c_1v_2^{-2}\b^2w\a + c_2 \zeta \beta v_2^{-3}\a a_{35}]\iota _X\\
&\defeq a_X \iota_X\nonumber
\end{align}
for some $c_1$ and $c_2$ in $\ZZ/3$. This expression is multiplicative in the following
sense. Suppose $X$ and $Y$ are in $\kappa_2$ and we have chosen isomorphisms
$E_\ast X \cong E_\ast \cong E_\ast Y$ as twisted $\GG_2$-modules. These choices
determine an isomorphism $E_\ast (X \wedge Y) \cong E_\ast$ and a multiplicative
pairing of spectral sequences
\begin{equation}\label{diff-for-2a}
\xymatrix{
H^\ast (\GG_2,E_\ast) \otimes H^\ast (\GG_2,E_\ast) \ar@{=>}[r]\dto_m &
\pi_\ast X \otimes \pi_\ast Y \dto^{\wedge}\\
H^\ast (\GG_2,E_\ast) \ar@{=>}[r] & \pi_\ast (X \wedge Y).
}
\end{equation}
where $m$ is the ring multiplication.  In particular. we have 
\begin{equation}\label{diff-for-2}
d_5(\iota_{X\wedge Y}) = (a_X + a_Y) \iota_{X\wedge Y}.
\end{equation}

The restriction maps define a commutative square
$$
\xymatrix{
H^5(\GG_2,E_4) \rto \dto & H^5(G_{24},E_4)\dto^{r_\ast}\\
H^5(\GG_2,\FF_9[u^{\pm 1}]_4) \rto & H^5(G_{24},\FF_9[u^{\pm1 }]_4)
}
$$
with the map $r_\ast: H^5(G_{24},E_4) \longr H^5(G_{24},\FF_9[u^{\pm 1}]_4)$ an isomorphism.
The source of $r_{\ast}$ is isomorphic to $\ZZ/3$ generated by $\alpha\beta^2\Delta^{-1}$, which
reduces to $\pm \alpha\beta^2w^{-3}=v_2^{-2}\b^2w\a$. 
Since $d_5\Delta = \pm \alpha\beta^2$,  
it follows that if $X \in \kappa_2$, then $X \wedge E^{hG_{24}} \simeq \Sigma^{24k} E^{hG_{24}}$,
with $k\in \Z$. Furthermore, because $E^{hG_{24}}$ is $72$-periodic on $\Delta^3$ we may assume 
$k\in \{0,1,2\}$. The assignment
$X \mapsto k\mod(3)$ defines short exact sequence
$$
0 \to \ZZ/3 \longr \kappa_2 \longr \ZZ/3 \to 0
$$
We call $X \in \kappa_2$ {\it truly exotic} if $k=0$; this is equivalent to having
$c_1=0$ in the formula (\ref{diff-for-1}). This sequence is a split; part of what this section
accomplishes is to provide a distinguished splitting.

\begin{prop}\label{v1-fail} Let $X \in \kappa_2$ be a non-trivial
truly exotic element in $\kappa_2$. Then $\pi_\ast X\wedge V(1)$ is not a free
module over $\Lambda(\zeta)$. In particular $\pi_\ast X$ cannot be isomorphic
to a shift of $\pi_\ast V(1)$.
\end{prop}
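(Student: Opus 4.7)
The plan is to compare the homotopy fixed-point spectral sequence for $\pi_\ast X \wedge V(1)$ with the one for $\pi_\ast L_{K(2)}V(1)$. Because $X \in \kappa_2$, the two spectral sequences share the same $E_2$-term
$$
H^\ast(\GG_2, E_\ast V(1)) \cong H^\ast(\GG_2, \FF_9[u^{\pm 1}])
$$
described in Theorem \ref{G2-cohom}; only the differentials differ. By the multiplicativity of (\ref{diff-for-2a}) and by (\ref{diff-for-1}) reduced modulo $(3, v_1)$, the generator $\iota_X \in E_2^{0, 0}$ supports
$$
d_5(\iota_X) = c_2\,\zeta\beta v_2^{-3}\alpha a_{35},
$$
where $c_2 \ne 0$ because $X$ is truly exotic ($c_1 = 0$) and non-trivial ($a_X \ne 0$). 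Since no differentials occur before $d_5$, the right-hand side is a non-zero class in $E_5$ (it is non-zero already in $E_2$ by Theorem \ref{G2-cohom}), so $\iota_X$ is killed and $E_6^{0, 0} = 0$.

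Next I would exhibit a concrete obstruction to $\Lambda(\zeta)$-freeness. The class $\zeta \in E_2^{1, 0}$ is a permanent cycle by naturality from the spectral sequence for $L_{K(2)}S^0$, and it cannot be the target of any differential since the source would lie in negative cohomological degree. Hence $\zeta$ detects a non-zero class $\overline{\zeta} \in \pi_{-1} X \wedge V(1)$ of Adams-Novikov filtration exactly $1$, using that $E_2^{0, -1} = H^0(\GG_2, E_{-1}) = 0$. Since $\zeta^2 = 0$ in $\pi_{-2} L_{K(2)}S^0$, the class $\overline{\zeta}$ lies in the kernel of multiplication by $\zeta$. I would then argue it is not in the image: if $\overline{\zeta} = \zeta \cdot y$ for some $y \in \pi_0 X \wedge V(1)$ of exact filtration $s$, then $\zeta \cdot y$ has filtration at least $s+1$, so matching filtrations with $\overline{\zeta}$ forces $s = 0$ and places $y$ non-trivially in $E_\infty^{0, 0} \subseteq E_6^{0, 0} = 0$, a contradiction.

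Therefore $\pi_\ast X \wedge V(1)$ is not free over $\Lambda(\zeta)$, and the in-particular statement follows because $\pi_\ast L_{K(2)}V(1) \cong \Lambda(\zeta) \otimes \pi_\ast E^{h\GG_2^1} \wedge V(1)$ is manifestly free over $\Lambda(\zeta)$, as is any suspension of it. The main subtlety is the filtration bookkeeping in the last step: one must also rule out the case where the leading term $\zeta \cdot [y]$ vanishes in $E_\infty^{s+1, s}$ so that $\zeta \cdot y$ has filtration strictly greater than $s+1$, but this case only strengthens the contradiction by forcing $s < 0$, which is impossible.
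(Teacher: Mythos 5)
Your first half is fine and parallels the paper: for a non-trivial truly exotic $X$ one has $c_1=0$, $c_2\neq 0$ in (\ref{diff-for-1}), so by naturality along $X\to X\wedge V(1)$ and Lemma \ref{reduction} the unit class dies, and sparseness (all internal degrees in $H^\ast(\GG_2,\FF_9[u^{\pm1}])$ are divisible by $4$) rules out earlier differentials, giving $E_6^{0,0}=E_\infty^{0,0}=0$. The genuine gap is in the second half, at the sentence ``the class $\overline{\zeta}$ lies in the kernel of multiplication by $\zeta$.'' Since $E_\infty^{0,0}=0$ there is no homotopy class detected by $\iota$, so $\overline{\zeta}$ is not literally $\zeta$ times a homotopy class, and the relation $\zeta^2=0$ in $\pi_\ast L_{K(2)}S^0$ only tells you that $\zeta\cdot\overline{\zeta}$ is detected by $\zeta\cdot[\zeta\iota]=0$, i.e.\ that it has Adams--Novikov filtration strictly larger than expected --- not that it vanishes. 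In this spectral sequence high filtration does not imply zero unless you have a horizontal vanishing line for $X\wedge V(1)$ in the relevant degree, and that is not available at this point: the differentials of the $X\wedge V(1)$ spectral sequence beyond $d_5$ are exactly what is unknown. A related problem occurs one step earlier: the claim that $\zeta\iota$ is a permanent cycle follows from the module structure only through $E_5$ (where $\zeta\cdot d_5(\iota)=0$ because $\zeta^2=0$ in the $E_2$-term); once $\iota$ is dead at $E_6$, the Leibniz rule gives no control over $[\zeta\iota]\in E_6^{1,0}$, and sparseness does not exclude, say, a $d_9$ landing in $E^{10,8}$, where classes such as $\beta^4v_2^{-5}\alpha a_{35}$ live.

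This is precisely the difficulty the paper's proof is designed to avoid: it works not in the $E$-based Adams--Novikov spectral sequence but in the spectral sequence of the finite centralizer-resolution tower (\ref{sphere-tower}), whose algebraic analogue has the differentials (\ref{alg-diffs}). There the relevant differential is $d_2(\iota_X)=\pm\zeta\beta v_2^{-3}\alpha a_{35}\,\iota_X$, and the key extra input is the vanishing $E_3^{s,t}=0$ for $s>2$, $t-s=-1$, coming from the finite length of the resolution: in that total degree ``pushed above filtration $2$'' really means zero. This is what converts the spectral-sequence statement into an honest homotopy class (detected by $\beta v_2^{-3}\alpha a_{35}\iota_X$ in filtration $1$) that is annihilated by $\zeta$ but not divisible by $\zeta$, contradicting $\Lambda(\zeta)$-freeness. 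To repair your argument you would either have to establish a comparable vanishing line (or permanent-cycle and kernel statements) in the Adams--Novikov spectral sequence for $X\wedge V(1)$, or import the finite-resolution tower as the paper does; your filtration bookkeeping in the last step is not the issue.
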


\begin{proof} 
We will use both the algebraic and topological spectral sequences
determined by the centralizer resolution. Since $E_\ast X \cong E_\ast$ as
twisted $\GG_2$-modules, the algebraic spectral sequence
for computing $H^\ast(\GG_2,E_\ast(X \wedge V(1))$ is isomorphic to
the spectral sequence for $H^\ast (\GG_1,E_\ast V(1))$. 
Thus we have exactly the differentials of (\ref{alg-diffs}). 
In particular  
$$
d_2(\alpha w^{2i+1} b_{36}) = \pm v_2^i e_{48}\ .
$$
In the topological spectral sequence given by the tower (\ref{sphere-tower})
we must have
$$
d_2(\iota_X) =  \pm \zeta \beta v_2^{-3}\a a_{35}\iota_X 
$$
This means that at $E_3$ the class
$$
y=\beta v_2^{-3}\a a_{35}\iota_X \in E_3^{1,\ast}
$$
is annihilated by $\zeta$. Since $E_3^{s,t} = 0$ for $s > 2$ and $t-s=-1$, we also have that $y$
is a permanent cycle and detects a homotopy class annihilated by $\zeta$. Since
$y \in E^{1,\ast}$ and is divisible by $\zeta$ at $E_1$, the homotopy class detected
by $y$ cannot be divisible by $\zeta$.
\end{proof}

\begin{lem}\label{v-1-success} There is a unique element $P \in \kappa_2$ so that 
$$
v_2^{3} \in H^\ast (\GG_2,E_\ast(P\wedge V(1))
$$
is a permanent cycle in the Adams-Novikov Spectral Sequence converging to
$\pi_\ast (P \wedge V(1))$.
\end{lem}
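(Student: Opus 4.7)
The plan is to parameterize $\kappa_2$ by the differential $d_5(\iota_X)$ and then use the multiplicative pairing of Adams-Novikov spectral sequences to translate the permanent-cycle condition on $v_2^3$ into a pair of linear equations in these parameters.

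First, I would invoke Lemma \ref{reduction} together with the main result of \cite{Picat3} to set up a bijection $\kappa_2 \leftrightarrow (\ZZ/3)^2$ under which $X$ corresponds to the unique pair $(c_1,c_2)$ with
$$
d_5(\iota_X) = [c_1 v_2^{-2}\beta^2 w\alpha + c_2 \zeta \beta v_2^{-3}\alpha a_{35}]\iota_X.
$$
The surjectivity here uses the realizability assertion recalled before Lemma \ref{reduction}, and the compatibility with smash products is exactly (\ref{diff-for-2}). This step reduces the problem to finding the unique $(c_1,c_2)$ for which $v_2^3$ survives the ANSS for $X \wedge V(1)$.

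Next, reading (\ref{G20diffs}) with $(i+3)/2 = 3$ gives $d_5(v_2^3) = \pm v_2 \beta^2 w \alpha$ in the spectral sequence for $V(1)$. Using the isomorphism $E_\ast X \cong E_\ast$ to identify $E_2$-pages and applying the Leibniz rule for (\ref{diff-for-2a}), I would compute
$$
d_5(v_2^3 \iota_{X \wedge V(1)}) = \bigl[(\pm 1 + c_1)\, v_2 \beta^2 w \alpha + c_2\, \zeta \beta \alpha a_{35}\bigr]\iota_{X \wedge V(1)}.
$$
By Theorem \ref{G2-cohom} the monomials $v_2 \beta^2 w \alpha$ and $\zeta \beta \alpha a_{35}$ are $\FF_3$-linearly independent generators in the image of $\widetilde{\rho}$; moreover, a quick bidegree check shows neither can be killed by any other $d_5$, because the only classes in bidegree $(0,48)$ are scalar multiples of $v_2^3$ itself. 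Hence the displayed differential vanishes at $E_5$ precisely when $c_1 = \mp 1$ and $c_2 = 0$, selecting a unique $P \in \kappa_2$.

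Finally, I would promote this $d_5$-vanishing to the statement that $v_2^3 \iota_{P \wedge V(1)}$ is a genuine permanent cycle. The formulas (\ref{G20diffs}) show that beyond $d_5$ the only nontrivial differentials in the $V(1)$ spectral sequence are the $d_9$'s, all of which originate on classes carrying an $\alpha$ factor; in particular $d_r(v_2^3) = 0$ for $r \geq 6$. Since $d_r(\iota_P) = 0$ for $r \neq 5$ (as $P \in \kappa_2$), the Leibniz rule forces $d_r(v_2^3 \iota_{P \wedge V(1)}) = 0$ for all $r \geq 6$, and this class cannot be the target of any differential since it sits in bidegree $(0,48)$. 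The main obstacle in this outline is the bookkeeping of signs in the Leibniz rule together with confirming the linear independence of the two candidate obstructions at $E_5$; both are handled by appealing to the explicit descriptions in Theorems \ref{G2-cohom} and \ref{Split-G20}.
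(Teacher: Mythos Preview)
Your approach is essentially the paper's: both parameterize $\kappa_2$ by the pair $(c_1,c_2)$ via $d_5(\iota_X)$ and solve for the unique value making $d_5(v_2^3\iota)$ vanish. The paper runs this in two steps---first restricting to $G_{24}$ to determine $c_1$ (this gives the three candidates $Y$), then smashing with a truly exotic $Q$ to cancel the residual obstruction $c\,\zeta\beta\alpha a_{35}$---whereas you carry out a single Leibniz computation using the pairing between the spectral sequences for $X$ and for $V(1)$. The content is the same and your identification of $(c_1,c_2)$ is correct.

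There is, however, a genuine gap in the final paragraph. You invoke the Leibniz rule at $E_r$ for $r\geq 6$, writing ``$d_r(\iota_P)=0$ for $r\ne 5$.'' But your $P$ has $c_1=\mp 1\ne 0$, so $d_5(\iota_P)\ne 0$ in the Adams--Novikov spectral sequence for $P$, and $\iota_P$ simply does not survive to $E_6$ there. The Leibniz rule for a pairing of spectral sequences at the $E_r$-page requires both factors to be $E_r$-classes; the expression $d_r(v_2^3\cdot\iota_P)=d_r(v_2^3)\iota_P+v_2^3\,d_r(\iota_P)$ is therefore not available for $r\geq 6$. A naive sparseness check at $E_2$ does not immediately rescue you either: for example $E_2^{9,56}$ contains $\zeta w\beta^4$, a candidate target for $d_9$ on $v_2^3$. (One can check, using the \emph{valid} Leibniz rule at $E_5$, that this particular class supports a nonzero $d_5$ in the spectral sequence for $P\wedge V(1)$ and hence is gone by $E_9$; but that must be argued, and there are further candidate bidegrees $(13,60),(17,64),\dots$ to examine.) The paper's own proof is equally terse at this step, asserting without comment that $d_5$ is the only possible differential; to make your argument watertight you would need either a target-by-target analysis of this kind or a comparison with the $G_{24}$- and $N$-fixed point spectral sequences to constrain the higher differentials.
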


\begin{proof} There exist three elements $Y \in \kappa_2$ so that $v_2^3 \in
H^0(G_{24},\FF_9[u^{\pm 1}])$ is a permanent cycle. Fix one. We conclude that the only
possible non-zero differential is then
$$
d_5 (v_2^3\iota_Y) = c \zeta \b\a a_{35} \iota_Y
$$
where $c \in \ZZ/3$. Using the diagram of spectral sequences (\ref{diff-for-2a}) we then see that 
there is a unique truly exotic element $Q \in \kappa_2$ so that
$$
d_5 (v_2^3\iota_{Y \wedge Q}) = 0.
$$
Then $P = Y\wedge Q$.
\end{proof}

\begin{thm}\label{P-define} There is a unique element $P \in \kappa_2$ so that
there is a weak equivalence
$$
\Sigma^{48}V(1) \mathop{\longr}^{\simeq} P \wedge V(1).
$$
\end{thm}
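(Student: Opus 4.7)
The plan is to take $P \in \kappa_2$ to be the element identified in Lemma \ref{v-1-success}, for which $v_2^3 \in H^0(\GG_2, E_\ast(P \wedge V(1)))$ is a permanent cycle in the Adams--Novikov spectral sequence, and to apply Theorem \ref{more-v1s} to $Y := \Sigma^{-48} P \wedge V(1)$, equipped with the $\GG_2$-equivariant isomorphism $E_\ast V(1) \cong E_\ast Y$ induced by multiplication by $v_2^3$. The theorem will then produce the desired equivalence $V(1) \simeq \Sigma^{-48} P \wedge V(1)$, equivalently $\Sigma^{48} V(1) \simeq P \wedge V(1)$, provided its three hypotheses are verified.

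For the two smash-product equivalences, since $P \in \kappa_2$, the discussion preceding Proposition \ref{v1-fail} gives $E^{hG_{24}} \wedge P \simeq \Sigma^{24k} E^{hG_{24}}$ for some $k \in \{0,1,2\}$. Using the identity $v_2 = -w^2$ and the differentials $d_5(w^{i+3}) = \pm \alpha\beta^2 w^i$ for $0 \le i \le 5$, the requirement that $v_2^3 \iota_P$ be a permanent cycle in the $G_{24}$-spectral sequence rules out $k = 0$ and $k = 1$ (in those cases $v_2^3 \iota_P$ corresponds up to units to $-w^6$ and to $w^3$ respectively, both supporting a nontrivial $d_5$), so $k = 2$ and $E^{hG_{24}} \wedge Y \simeq E^{hG_{24}} \wedge V(1)$. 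Since $|SD_{16}| = 16$ is coprime to $3$, the homotopy fixed point spectral sequence for $E^{hSD_{16}} \wedge P$ collapses in filtration $0$ with $E_2 \cong H^0(SD_{16}, E_\ast P) \cong \pi_\ast E^{hSD_{16}}$; hence $E^{hSD_{16}} \wedge P \simeq E^{hSD_{16}}$, and the $16$-periodicity of $E^{hSD_{16}}$ coming from the invertibility of $v_2 \in \pi_{16} E^{hSD_{16}}$ yields $E^{hSD_{16}} \wedge Y \simeq E^{hSD_{16}} \wedge V(1)$.

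For the third hypothesis, under the suspension isomorphism $\pi_0 Y = \pi_{48} P \wedge V(1)$ the class $\iota_Y \in \pi_0 E^{hG_{24}} \wedge Y$ is detected by $v_2^3 \iota_P$, which by Lemma \ref{v-1-success} is a permanent cycle in the full $\GG_2$ Adams--Novikov spectral sequence for $P \wedge V(1)$; it therefore lifts to a nonzero class $\epsilon_Y: S^0 \to Y$, automatically of order $3$ by Remark \ref{id-p-v1}. Applying Theorem \ref{more-v1s} then yields the required equivalence. Uniqueness follows from that of $P$ in Lemma \ref{v-1-success}: any $P' \in \kappa_2$ realizing $\Sigma^{48} V(1) \simeq P' \wedge V(1)$ induces on $E$-homology an isomorphism under which $v_2^3 \iota_{P'}$ detects the equivalence and is therefore a permanent cycle, forcing $P' = P$. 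The main technical point is the step that pins down $k = 2$ for $E^{hG_{24}} \wedge P$; this works because the shift $48 = 24 \cdot 2$ lies in the $E^{hG_{24}}$-suspension lattice and simultaneously $48 = 16 \cdot 3$ lies in the $E^{hSD_{16}}$-periodicity lattice, so both pieces of data are consistent with one another.
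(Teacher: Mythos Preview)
Your proof is correct and follows essentially the same approach as the paper: take $P$ from Lemma \ref{v-1-success}, verify the hypotheses of Theorem \ref{more-v1s} for $Y = \Sigma^{-48}P\wedge V(1)$, and deduce uniqueness from the uniqueness clause of Lemma \ref{v-1-success}. Your argument pinning down $k=2$ by tracking $v_2^3\iota_P$ through the $G_{24}$-spectral sequence is a more explicit version of what the paper compresses into the phrase ``by restriction''; the paper's uniqueness step is in turn slightly more explicit than yours, invoking $H^0(\GG_2,\FF_9[u^{\pm 1}]_{48})\cong\ZZ/3$ to force the induced map on $E_\ast$ to be multiplication by $v_2^3$ up to a unit.
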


\begin{proof} We first prove existence. Let $P$ the element of $\kappa_2$
identified in Lemma \ref{v-1-success}. Since $v_2^3$ is a permanent cycle
in $H^\ast(G_{24},E_\ast P)$ by restriction, we have that
$$
\Sigma^{48}E^{hG_{24}} \simeq E^{hG_{24}} \wedge P.
$$
It is also automatic that $E^{hSD_{16}} \wedge P \simeq E^{hSD_{16}}$
as the spectral sequence
$$
H^s (SD_{16},E_t P) \Longrightarrow \pi_{s-t}E^{hSD_{16}} \wedge P
$$
collapses to the $s=0$ line. Since $E^{hSD_{16}}$ is $16$-periodic,
we can now apply Theorem \ref{more-v1s} to the
spectrum $\Sigma^{-48}P$ to produce the equivalence
$V(1) \simeq \Sigma^{-48}P\wedge V(1)$.

To get uniqueness, notice that an equivalence
$\Sigma^{48}V(1) \to Y\wedge V(1)$ produces an isomorphism
$E_\ast \Sigma^{48} V(1) \cong E_\ast (Y \wedge V(1))$.
Since $H^0(\GG_2,\FF_9[u^{\pm 1}]_{48}) \cong \ZZ/3$ generated by $v_2^3$,
we have that $v_2^3$ is a permanent cycle in the Adams-Novikov spectral sequence 
for $Y\wedge V(1)$ and Lemma \ref{v-1-success} applies.
\end{proof}

\begin{rem}\label{shim-sim-results} In \cite{IchSm}, Ichigi and Shimomura calculated
$\pi_\ast X \wedge V(1)$ for some $X$ in the Picard group. These results are closely 
related to the calculations done in this section.
\end{rem}

\section{The calculation of $\Sdet\wedge V(1)$} 

In this section we construct a weak equivalence
$\Sigma^{72}V(1) \simeq \Sdet\wedge V(1)$.
We begin by reviewing the construction of $\Sdet$. This material is 
discussed in more detail in \S 2 of \cite{Picat3}.

The determinant map $\GG_2 \to \ZZ_3^\times$ of (\ref{det-defined})
defines a $\GG_2$-module
$\ZZ_3\langle\det\rangle$. If $M$ is any other $\GG_2$-module, we write
$$
M\langle\det\rangle = M \otimes_{\ZZ_3} \ZZ_3\langle\det\rangle\ .
$$ 

Define $\SGG_2\subseteq \GG_2$ to be the kernel of
$\det:\GG_2 \to \ZZ_3^\times$. The image of the central element 
$\psi ^4:=4+0.S\in \GG_2$ with respect to this homomorphism is a topological generator 
of the subgroup $1+3\ZZ_3$. 

Let $(E^{hS\GG_2})^{-}$ be the wedge summand
of $E^{h\SGG_2}$ realizing the $(-1)$-eigenspace of the action of $C_2 \subseteq \ZZ_3^\times$.
Then $\Sdet$ is defined as the fibre of the self map $\psi^{4}-\det(\psi^4)id$, i.e. there is a fibration   
\begin{equation}\label{sdet-def-seq}
\xymatrix{
\Sdet \rto &(E^{h\SGG_2})^{-}
\ar[rr]^{\psi^4 - \det(\psi^4)id} && (E^{h\SGG_2})^{-} 
}
\end{equation} 
from which one deduces the isomorphism $E_*(\Sdet)\cong E_*\langle\det\rangle$. 

We note that $\SGG_2$ is of index $2$ in the subgroup $\GG_2^1$ 
and therefore the residual action of the quotient group gives a splitting 
\begin{equation}\label{split-SG}
E_2^{h\SGG_2}\simeq (E_2^{h\SGG_2})^{+}\vee (E_2^{h\SGG_2})^{-}
\simeq E_2^{h\GG_2^1}\vee (E_2^{h\SGG_2})^{-}
\end{equation}
into its $(+1)$ eigenspectrum and its $(-1)$ eigenspectrum. 

Likewise the inclusion of the index $2$ subgroup $\GG_2^0\subset \GG_2$  
given as the kernel of the homomorphism  
$$
\xymatrix{
\GG_2 \rto^\det & \ZZ_3^\times \rto & \ZZ_3^\times/1+3\ZZ_3\cong C_2\ . 
}
$$ 
gives a residual action of $C_2$ on $E_2^{h\GG_2^0}$ and an associated splitting 
\begin{equation}\label{split-G20}
E_2^{h\GG_2^0}\simeq (E_2^{h\GG_2^0})^{+}\vee (E_2^{h\GG_2^0})^{-}\simeq 
E_2^{h\GG_2}\vee (E_2^{h\GG_2^0})^{-}
\end{equation}
into its $(+1)$ eigenspectrum and its $(-1)$ eigenspectrum.

\begin{prop}\label{split-off-0} Let $X$ be any $K(2)$-local spectrum so that the identity
of $X$ has order $3$. Then
$$
E^{h\GG_2^0} \wedge X \simeq X \vee (\Sdet \wedge X) \ . 
$$
and
$$
(E^{h\GG_2^0})^{-}\wedge X \simeq \Sdet \wedge X\ .
$$
\end{prop}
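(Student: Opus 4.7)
The plan is to prove the second equivalence first, since the first then follows quickly from the splitting (\ref{split-G20}) together with the Devinatz--Hopkins identification $E^{h\GG_2}\simeq L_{K(2)}S^0$. Indeed, smashing (\ref{split-G20}) with $X$ gives
$$
E^{h\GG_2^0}\wedge X \simeq (E^{h\GG_2}\wedge X) \vee ((E^{h\GG_2^0})^{-}\wedge X) \simeq X \vee ((E^{h\GG_2^0})^{-}\wedge X),
$$
so the first equivalence is immediate once we know $(E^{h\GG_2^0})^{-}\wedge X \simeq \Sdet\wedge X$.

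For the second equivalence, I would compare the defining fibration (\ref{sdet-def-seq}) of $\Sdet$ with a parallel fibration describing $(E^{h\GG_2^0})^{-}$. The inclusion $\SGG_2 \subseteq \GG_2^0$ has quotient $1+3\ZZ_3\cong \ZZ_3$, topologically generated by the image of $\psi^4$. Standard homotopy fixed point descent for a pro-cyclic group action then produces a fiber sequence
$$
E^{h\GG_2^0} \longr E^{h\SGG_2} \xrightarrow{\ \psi^4 - id\ } E^{h\SGG_2},
$$
and since the splitting into $(\pm 1)$-eigenspectra is natural and preserves fiber sequences, we obtain
$$
(E^{h\GG_2^0})^{-} \longr (E^{h\SGG_2})^{-} \xrightarrow{\ \psi^4 - id\ } (E^{h\SGG_2})^{-}.
$$

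The key observation is now simply that the two self-maps $\psi^4 - id$ and $\psi^4 - \det(\psi^4)\cdot id$ on $(E^{h\SGG_2})^{-}$ differ by $(\det(\psi^4) - 1)\cdot id$, and $\det(\psi^4)-1$ lies in $3\ZZ_3$ because $\det(\psi^4)\in 1+3\ZZ_3$. Hence, after smashing with $X$, the two maps differ by an integer multiple of $3 \cdot id_X$; by hypothesis $3\cdot id_X = 0$, so the two maps agree in the $K(2)$-local homotopy category. Their fibers are therefore equivalent, yielding $(E^{h\GG_2^0})^{-}\wedge X \simeq \Sdet\wedge X$ as desired.

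There is no serious obstacle here; the whole argument is set up to exploit the one arithmetic fact $\det(\psi^4)\equiv 1\pmod 3$, which is precisely the reason the hypothesis on $X$ is formulated as $3\cdot id_X = 0$. The only care needed is to verify that the $(-1)$-eigenspectrum splitting commutes with the fiber sequence used to build $E^{h\GG_2^0}$ from $E^{h\SGG_2}$, but this is immediate from the naturality of the splitting idempotent.
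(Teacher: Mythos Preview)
Your proof is correct and uses the same key observation as the paper: the two self-maps $\psi^4 - id$ and $\psi^4 - \det(\psi^4)\,id$ on $(E^{h\SGG_2})^{-}$ differ by a multiple of $3$, and hence agree after smashing with $X$. The only difference is organizational: you prove the second equivalence first and deduce the first from the splitting (\ref{split-G20}), whereas the paper establishes the fibration for $\Sdet\wedge X$, wedges it with the analogous fibration for $E^{h\GG_2}\wedge X\simeq X$ to get the first equivalence directly, and then reads off the second from the same fibration together with the commutativity of the $C_2$ and $\ZZ_3$ actions (your ``naturality of the splitting idempotent'').
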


\begin{proof} We observe that the assumption on $X$ shows that  there is a fibration  
\begin{equation}\label{SdetX}
\xymatrix{\Sdet\wedge X \rto &
(E^{h\SGG_2})^{-}\wedge X
\ar[rr]^{(\psi^4 -id)\wedge 1_X} && (E^{h\SGG_2})^{-}\wedge X}
\end{equation} 
By using the splitting (\ref{split-SG}) and by taking the wedge sum of this fibration with the fibration 
$$
\xymatrix{X\simeq E^{h\GG_2}\wedge X  \rto &
E^{h\GG_2^1}\wedge X
\ar[rr]^{(\psi^4 -id)\wedge 1_X} && E^{h\GG_2^1}\wedge X}
$$ 
we get a fibration  
$$
\xymatrix{X\vee (\Sdet \wedge X) \rto &
E^{h\SGG_2}\wedge X
\ar[rr]^{(\psi^4 -id)\wedge 1_X} && E^{h\SGG_2}\wedge X}\ . 
$$ 
Because $\psi^4$ lies in the subgroup $\GG_2^0$ and its image in the quotient 
$\GG_2^0/\SGG_2\cong \GG_2/\GG_2^1\cong \ZZ_3$ is a topological generator 
we see that the fibre of the self map of $E^{\SGG_2}\wedge X$ given by $(\psi^4-id)\wedge id_X$ is 
also equivalent to $E^{h\GG_2^0}\wedge X$. This establishes the first part of the proposition.  

The second part is a consequence of (\ref{SdetX}) and the fact that $\Z_3^{\times}$ is an 
abelian group so that the actions of the two factors $C_2$ and $\Z_3$ on $E^{h\SGG_2}$ commute. 
\end{proof}

We noted in Remark \ref{id-p-v1} that the identity of $V(1)$ has order $3$ and so the proposition 
can be applied to $V(1)$.  Together with Theorem \ref{Split-G20} this immediately gives 
the following result. 

\begin{cor}\label{basic-shift} There is an isomorphism of
modules over $\FF_2[\beta,v_2^{\pm 9}] \otimes \Lambda(\alpha,\zeta)$
$$
\pi_\ast\Sigma^{72}V(1) \cong \pi_\ast \Sdet \wedge V(1)\ . \ \ \ \  \ \ \ \ \ \ \ \ \qed 
$$
\end{cor}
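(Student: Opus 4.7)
The plan is to combine Proposition \ref{split-off-0} and Theorem \ref{Split-G20} almost verbatim. First, Remark \ref{id-p-v1} guarantees that the identity map of $V(1)$ has order $3$, so Proposition \ref{split-off-0} applies with $X = V(1)$. This gives both a wedge splitting
$$
E^{h\GG_2^0} \wedge V(1) \simeq V(1) \vee (\Sdet \wedge V(1))
$$
and the identification $(E^{h\GG_2^0})^{-} \wedge V(1) \simeq \Sdet \wedge V(1)$, where the superscripts refer to the $\pm 1$ eigenspectra for the residual $C_2 = \GG_2/\GG_2^0$-action on $E^{h\GG_2^0}$.

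Next I would pass to homotopy groups and invoke part (2) of Theorem \ref{Split-G20}, which provides the parallel algebraic decomposition
$$
\pi_\ast(E^{h\GG_2^0} \wedge V(1)) \cong \pi_\ast V(1) \oplus v_2^{9/2}\pi_\ast V(1)
$$
compatible with the splitting $E^{h\GG_2^0} \simeq E^{h\GG_2} \vee (E^{h\GG_2^0})^{-}$ into $C_2(\omega)$-eigenspectra. The $(+1)$-summand is $\pi_\ast V(1)$ on both sides, so matching $(-1)$-summands yields an isomorphism of $\pi_\ast V(1)$-modules
$$
\pi_\ast(\Sdet \wedge V(1)) \cong v_2^{9/2}\pi_\ast V(1).
$$

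Finally, since $u$ has degree $-2$ and $v_2 = u^{-8}$ has degree $16$, the element $v_2^{9/2}$ has degree $72$, so multiplication by $v_2^{9/2}$ is an isomorphism $\pi_\ast V(1) \xrightarrow{\cong} \pi_\ast \Sigma^{72} V(1)$. The explicit module presentation in Theorem \ref{Split-G20}.1 shows that this is compatible with the $\FF_3[\beta,v_2^{\pm 9}] \otimes \Lambda(\alpha,\zeta)$-module structure, giving the stated isomorphism.

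The only subtlety — and the one point I would make explicit in the write-up — is to verify that the two $C_2$-decompositions of $E^{h\GG_2^0}$ in play actually coincide: the decomposition arising from Proposition \ref{split-off-0} (which uses the fibre-sequence definition of $\Sdet$ together with the splitting $E^{h\SGG_2} \simeq E^{h\GG_2^1} \vee (E^{h\SGG_2})^{-}$ and the commuting factors $C_2 \times \ZZ_3 \subseteq \ZZ_3^\times$), and the residual $C_2(\omega)$-decomposition used in Theorem \ref{Split-G20}.2. Both are induced by the single quotient $\GG_2/\GG_2^0 \cong \{\pm 1\}$, so they agree; once this is noted, the corollary is an immediate assembly of the two quoted results.
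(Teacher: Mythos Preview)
Your proof is correct and follows exactly the paper's approach: the paper simply says that Remark \ref{id-p-v1} allows Proposition \ref{split-off-0} to be applied to $V(1)$, and together with Theorem \ref{Split-G20} this gives the result. Your write-up is more detailed, and in particular you make explicit the one point the paper leaves tacit---that the $C_2(\omega)$-eigendecomposition in Theorem \ref{Split-G20} is the same as the $\GG_2/\GG_2^0$-decomposition used in Proposition \ref{split-off-0} (since $\det(\omega)=-1$)---which is a welcome clarification.
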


We need one more ingredient before we can identify $\Sdet\wedge V(1)$. 
Compare Remark \ref{homo-Sdet-v1}.

\begin{lem}\label{homo-Sdet-v1-extra} Multiplication by $v_2^{9/2}$ defines
an isomorphism of twisted $\GG_2$-modules
$$
\Sigma^{72} E_\ast V(1) \cong E_\ast (\Sdet \wedge V(1))\ .
$$
\end{lem}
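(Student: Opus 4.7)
The plan is to reduce the lemma to a character identity in $\FF_9^\times$. By the defining fibration (\ref{sdet-def-seq}) we have $E_\ast \Sdet \cong E_\ast\langle \det \rangle$ as twisted $\GG_2$-modules, hence
$$
E_\ast(\Sdet \wedge V(1)) \cong E_\ast\langle\det\rangle/(3, v_1) \cong \FF_9[u^{\pm 1}]\langle \overline{\det}\rangle,
$$
where $\overline{\det}:\GG_2\to \FF_9^\times$ is $\det$ followed by the reduction $\ZZ_3^\times\to \FF_3^\times\hookrightarrow \FF_9^\times$. As graded $E_\ast$-modules, both $\Sigma^{72}E_\ast V(1)$ and $E_\ast(\Sdet\wedge V(1))$ are free of rank one over $E_\ast/(3,v_1)$, and the element $v_2^{9/2}$ reduces mod $(3,v_1)$ to $u^{-36}\in \FF_9[u^{\pm 1}]_{72}$; multiplication by $u^{-36}$ is therefore a graded $E_\ast$-module isomorphism between them. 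The only issue is $\GG_2$-equivariance.

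On $\FF_9[u^{\pm 1}]$ the action of $\GG_2$ on $u$ is governed by a character $\chi:\GG_2\to \FF_9^\times$, with $g_\ast(u) = \chi(g)u$; the formulas (\ref{GHMRFor2}) give $\chi(\omega)=\omega$ and $\chi(\phi)=1$. Comparing the source action $g_\ast$ with the target action $x\mapsto \overline{\det}(g)\, g_\ast(x)$, the map $x\mapsto u^{-36}x$ is $\GG_2$-equivariant if and only if
$$
\overline{\det}(g) = \chi(g)^{36}\quad\text{for all}\ g\in \GG_2.
$$
Since $\FF_9^\times$ has order $8$, the exponent $36$ collapses to $4$, and the condition becomes $\overline{\det}=\chi^4$.

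The main obstacle is establishing this character identity. I would do so by checking both continuous characters on a topological generating set for $\GG_2$: the central subgroup $U_1$ of (\ref{GG2-splitting}) (on which both sides are trivial, $U_1$ being pro-$3$ and mapping into $1+3\ZZ_3$), the Galois element $\phi$ (trivial on both sides by (\ref{GHMRFor2}) and the definition (\ref{det-defined})), the Teichm\"uller element $\omega$, and the element $S$ generating the maximal ideal of $\End(\Ga_2)$. The values of $\chi$ on $\omega$ and $S$ are read off the action on $u$ recalled in Section 2, and the values of $\overline{\det}$ come from the discussion of the determinant representation in Section 5. This identity is moreover an instance of the general formula $\overline{\det}=\chi^{(p^n-1)/(p-1)}$ for the Honda formal group, which at $p=3$, $n=2$ reads precisely $\overline{\det}=\chi^4$. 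Once this is confirmed, the $\GG_2$-equivariance of multiplication by $u^{-36}$ is immediate and the lemma follows.
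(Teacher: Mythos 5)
Your reduction is the same one the paper makes: identify $E_\ast(\Sdet\wedge V(1))$ with $\FF_9[u^{\pm 1}]$ twisted by the mod-$3$ determinant character, and observe that multiplication by $u^{-36}$ is $\GG_2$-equivariant precisely because $\overline{\det}(g)=\chi(g)^{36}=\chi(g)^4$. The paper settles this identity by a one-line direct computation rather than by any generating-set argument: for $g=a+bS\in\SS_2$ one has $\det(g)=a\phi(a)-3b\phi(b)\equiv a\cdot a^3=a^4 \bmod 3$, while $g_\ast u=au$ in $\FF_9[u^{\pm 1}]$, so $\overline{\det}=\chi^4$ holds on the nose for every element of $\SS_2$ (and both characters are trivial on the Galois factor, with the $\FF_9$-semilinearity of $\phi_\ast$ causing no trouble since $\phi_\ast u=u$). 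This is exactly the computation proving the general formula $\overline{\det}=\chi^{(p^n-1)/(p-1)}$ that you cite, so your argument is essentially the paper's.

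The one step of your plan that would fail as written is the proposed verification of $\overline{\det}=\chi^4$ on the set $\{U_1,\phi,\omega,S\}$: the element $S$ is not invertible ($S^2=3$), hence is not an element of $\SS_2$ or $\GG_2$, so neither character can be evaluated on it; and $U_1$, $\omega$, $\phi$ alone do not come close to topologically generating $\GG_2$ (they generate only a part of $\WW(\FF_9)^\times\rtimes\Gal(\FF_9/\FF_3)$, missing all elements with nontrivial $S$-component and most of $1+3\WW(\FF_9)$). So if you take the generating-set route you must enlarge the set (e.g.\ include elements $1+\omega^i S$), at which point the direct computation $\det(a+bS)\equiv a^4$ is simpler and uniform; with that substitution your proof is complete and coincides with the paper's.
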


\begin{proof} There is an isomorphism
$$
E_\ast \Sdet \cong E_\ast\langle\det\rangle = E_\ast \otimes \ZZ_3\langle\det\rangle.
$$
If we write $g=a + bS \in \SS_2$, then $\det(g) = a^4$ modulo $3$. Furthermore,
in $E_\ast V(1) \cong \FF_9 [u^{\pm 1}]$, $g$ acts trivially on degree $0$ and
$gu = au$. Thus, in $E_\ast (\Sdet \wedge V(1))$,
$$
g(u^{8i+4})= a^{8i+8}u^{8i+4} = u^{8i+4}
$$
as $a \in \FF_9^\times$ modulo $3$. In particular $v_2^{9/2} = u^{-36}$ is invariant, as needed.
\end{proof}

\begin{thm}\label{SdetV(1)} There is a weak equivalence
of $K(2)$-local spectra
$$
\Sigma^{72} V(1) \simeq \Sdet \wedge V(1).
$$
\end{thm}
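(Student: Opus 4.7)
The plan is to apply Theorem \ref{recon-v1} to the spectrum $X := \Sigma^{-72}(\Sdet \wedge V(1))$. By Lemma \ref{homo-Sdet-v1-extra}, multiplication by $v_2^{9/2}$ furnishes an isomorphism of twisted $\GG_2$-modules $g: E_\ast V(1) \xrightarrow{\cong} E_\ast X$. Write $\iota_X \in H^0(\GG_2, E_0 X)$ for the image of $1 \in H^0(\GG_2, E_0 V(1))$ under $g$. The task is to verify the three hypotheses of Theorem \ref{recon-v1}, after which the promised equivalence $V(1) \simeq X$, that is $\Sigma^{72} V(1) \simeq \Sdet \wedge V(1)$, follows immediately.

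For hypothesis (1), the class $\iota_X$ corresponds, under the 72-fold suspension, to $v_2^{9/2} \in H^0(\GG_2, E_{72}(\Sdet \wedge V(1)))$. By Proposition \ref{split-off-0} the spectrum $\Sdet \wedge V(1)$ is the $(-1)$-eigensummand of $E^{h\GG_2^0}\wedge V(1)$, and the spectral sequence for the latter was completely analyzed in Theorem \ref{Split-G20}. Inspection of the differentials listed in (\ref{G20diffs}) shows that the indexing congruences modulo $9$ exclude $v_2^{9/2}$ as the source of any differential, so $v_2^{9/2}$ is a permanent cycle. It detects a class of order $3$ because, by Remark \ref{id-p-v1}, every homotopy group of a smash product with $V(1)$ is an $\FF_3$-vector space.

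For hypotheses (2) and (3), Corollary \ref{basic-shift} provides an isomorphism $\pi_\ast X \cong \pi_\ast V(1)$ of modules over $\FF_3[\beta]\otimes \Lambda(\alpha,\zeta)$. The explicit calculation of $\pi_\ast V(1)$ in Theorem \ref{Split-G20} (and displayed in Figure 2 of the appendix) yields $\pi_4 V(1) = 0$ and shows that multiplication by $\beta$ is injective on $\pi_5 V(1)$. With all three hypotheses in hand, Theorem \ref{recon-v1} produces the required $K(2)$-local equivalence.

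The only non-mechanical step is the permanent cycle assertion in (1); this is really the reason for having developed the full spectral sequence picture of Theorem \ref{Split-G20} in the preceding section. Everything else in the proof is direct bookkeeping: the essential topological content has been packaged into Theorem \ref{recon-v1}, and the essential algebraic content into Lemma \ref{homo-Sdet-v1-extra} and Corollary \ref{basic-shift}.
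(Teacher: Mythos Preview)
Your proof is correct and follows exactly the same route as the paper: invoke Lemma \ref{homo-Sdet-v1-extra} for the $E_\ast$-isomorphism, Corollary \ref{basic-shift} for the homotopy input, and feed both into Theorem \ref{recon-v1}. The paper's proof is a one-line citation of these three results; you have simply unpacked the verification of the hypotheses of Theorem \ref{recon-v1}, including the permanent-cycle check for $v_2^{9/2}$ against the differentials (\ref{G20diffs}) and the order-$3$ observation from Remark \ref{id-p-v1}, which the paper leaves to the reader.
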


\begin{proof} This follows from Theorem \ref{recon-v1}, Proposition \ref{basic-shift}, and
Lemma \ref{homo-Sdet-v1-extra}.
\end{proof}

\section{The Brown-Comenetz dual of the sphere}

The results of this section complete the proof of Theorem \ref{BCofS}.
Let $P \in \kappa_2$ be the exotic element of the Picard
group singled out in Theorem \ref{P-define}.

\begin{thm}\label{BCofSproof} Let $n=2$ and $p=3$. 
The Brown-Comenetz dual $I_2$  of $L_{K(2)}S^0$ is given as 
$$
I_2\simeq S^2\wedge \Zdet \wedge P\ . 
$$ 
\end{thm}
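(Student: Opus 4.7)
The plan is to combine the Gross--Hopkins formula with the Picard group calculation of \cite{Picat3} and then pin down the remaining ambiguity by smashing with $V(1)$ and comparing with Theorem \ref{dual-of-v1}. Specifically, from (\ref{GrHopFor}) we know $(E_2)_\ast I_2 \cong \Sigma^2 (E_2)_\ast\langle\det\rangle$, so $I_2$ lies in the Picard group; writing this Picard element as a product of $\Sigma^2\Sdet$ and an exotic factor using the decomposition $\kappa_2 \cong \Z/3\{P\} \oplus \Z/3\{Q\}$ from \cite{Picat3}, we obtain, as in equation (\ref{GH-pic-for}),
\begin{equation*}
I_2 \simeq \Sigma^2 \Sdet \wedge P^{\wedge a} \wedge Q^{\wedge b}
\end{equation*}
for some pair $(a,b) \in \Z/3 \times \Z/3$. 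The entire task is to show $(a,b) = (1,0)$.

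First I would smash both sides with $V(1)$ and simplify. By Theorem \ref{SdetV(1)} we have $\Sdet \wedge V(1) \simeq \Sigma^{72} V(1)$, and by Theorem \ref{P-define} we have $P \wedge V(1) \simeq \Sigma^{48} V(1)$, so iterating gives $P^{\wedge a} \wedge V(1) \simeq \Sigma^{48a} V(1)$. Combining these yields
\begin{equation*}
I_2 \wedge V(1) \simeq \Sigma^{74+48a}\, V(1) \wedge Q^{\wedge b}.
\end{equation*}
On the other hand, Theorem \ref{dual-of-v1}(2) gives an isomorphism $\pi_\ast I_2 \wedge V(1) \cong \pi_\ast \Sigma^{-22} V(1)$, so $\pi_\ast V(1) \wedge Q^{\wedge b}$ is isomorphic (as a graded abelian group, indeed as a module over the appropriate algebra) to a suspension of $\pi_\ast V(1)$.

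Next I would invoke Proposition \ref{v1-fail} to force $b=0$. Since $Q$ is truly exotic, any nonzero power $Q^{\wedge b}$ with $b \in \{1,2\}$ is again a nontrivial truly exotic element of $\kappa_2$, so by Proposition \ref{v1-fail} the homotopy $\pi_\ast Q^{\wedge b} \wedge V(1)$ fails to be a free module over $\Lambda(\zeta)$ and in particular cannot agree with a suspension of $\pi_\ast V(1)$. The only way the previous displayed isomorphism can hold is therefore $b=0$, and we conclude $I_2 \wedge V(1) \simeq \Sigma^{74+48a} V(1)$.

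Finally I would pin down $a$ using the $v_2^9$-periodicity of the localized $V(1)$ (see Example \ref{periodicity}), which makes $V(1)$ exactly $144$-periodic. The equivalence $\Sigma^{74+48a} V(1) \simeq \Sigma^{-22} V(1)$ forces $74+48a \equiv -22 \pmod{144}$, i.e.\ $48a \equiv 48 \pmod{144}$, which has the unique solution $a=1$ in $\Z/3$. Substituting $(a,b)=(1,0)$ back into the Picard decomposition gives $I_2 \simeq \Sigma^2 \Sdet \wedge P = S^2 \wedge \Zdet \wedge P$, as claimed. The main obstacle here is really the bookkeeping: one needs Proposition \ref{v1-fail} applied to $Q^{\wedge b}$ rather than to the full factor $P^{\wedge a} \wedge Q^{\wedge b}$ (which is why it is useful that $P \wedge V(1)$ is literally a suspension of $V(1)$), and one must trust the $144$-periodicity of $V(1)$ to make the mod-$144$ comparison of suspensions legitimate.
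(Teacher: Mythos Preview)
Your proof is correct and follows essentially the same route as the paper: reduce via Gross--Hopkins and \cite{Picat3} to $I_2 \simeq \Sigma^2\Sdet\wedge P^{\wedge a}\wedge Q^{\wedge b}$, smash with $V(1)$ using Theorems~\ref{SdetV(1)} and~\ref{P-define}, eliminate the truly exotic factor via Proposition~\ref{v1-fail} together with Theorem~\ref{dual-of-v1}, and then solve the congruence $74+48a\equiv -22\pmod{144}$ coming from the $144$-periodicity of $V(1)$ to get $a=1$.
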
 

\begin{proof}  We know from the Gross-Hopkins formula (\ref{GrHopFor})
and \cite{Picat3}, that
$$
I_2\simeq S^2\wedge \Zdet \wedge P^{\wedge n} \wedge Q
$$
where $0 \leq n \leq 2$ and $Q \in \kappa_2$ has the property that
$E^{hG_{24}} \wedge Q \simeq E^{hG_{24}}$. Combining Theorems
\ref{P-define} and \ref{SdetV(1)} we have
$$
S^2\wedge \Zdet \wedge P^{\wedge n} \wedge V(1) \simeq 
\Sigma^{48n + 74} V(1).
$$
>From this and Proposition \ref{v1-fail}, we can conclude that if $Q$ is
non-trivial, then 
$\pi_\ast I_2 \wedge V(1)$ is not free over $\Lambda(\zeta)$, which contradicts
Theorem \ref{dual-of-v1}. Finally, again using Theorem
\ref{dual-of-v1}, we need
$$
48n + 74 \equiv -22\qquad \mathrm{modulo}\ 144
$$
as $V(1)$ is 144-periodic. Thus $n=1$.
\end{proof}

\begin{cor}\label{dual-v1-real} There is a weak equivalence of $K(2)$-local
spectra
$$
\Sigma^{-22}V(1) \simeq I_2 \wedge V(1).
$$
\end{cor}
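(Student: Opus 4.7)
The plan is to assemble the corollary directly from the three preceding equivalences by a rearrangement of the (internal) smash product in the $K(2)$-local category, followed by invoking $144$-periodicity of $V(1)$. The three inputs are: the formula $I_2 \simeq S^2 \wedge \Zdet \wedge P$ from Theorem \ref{BCofSproof}; the equivalence $P \wedge V(1) \simeq \Sigma^{48} V(1)$ from Theorem \ref{P-define}; and the equivalence $\Sdet \wedge V(1) \simeq \Sigma^{72} V(1)$ from Theorem \ref{SdetV(1)}. Since all smashes are tacitly $K(2)$-localized, and $\Sdet$ and $P$ are invertible in the $K(2)$-local category, smashing with them preserves weak equivalences, so we may freely move parentheses.

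Concretely, I would first write
$$
I_2 \wedge V(1) \simeq S^2 \wedge \Sdet \wedge P \wedge V(1).
$$
Then I would replace $P \wedge V(1)$ by $\Sigma^{48} V(1)$ using Theorem \ref{P-define}, obtaining
$$
I_2 \wedge V(1) \simeq S^2 \wedge \Sdet \wedge \Sigma^{48} V(1) \simeq \Sigma^{50}\bigl(\Sdet \wedge V(1)\bigr),
$$
and next use Theorem \ref{SdetV(1)} to replace $\Sdet \wedge V(1)$ by $\Sigma^{72} V(1)$, yielding
$$
I_2 \wedge V(1) \simeq \Sigma^{2+48+72} V(1) = \Sigma^{122} V(1).
$$

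Finally, I would appeal to the $v_2^9$-self map of $V(1)$ (discussed in Example \ref{periodicity}), which gives $\Sigma^{144} V(1) \simeq V(1)$ in the $K(2)$-local category, and therefore
$$
\Sigma^{122} V(1) \simeq \Sigma^{122-144} V(1) = \Sigma^{-22} V(1),
$$
which is the desired equivalence. There is no real obstacle here: the whole argument is a bookkeeping step assembling previously established invertibility, shift, and periodicity statements. The one point to be careful about is that the rearrangement of the smash product is legitimate because $P$ and $\Sdet$ are Picard elements, so smashing with them commutes with suspensions up to canonical equivalence and preserves weak equivalences; the arithmetic $2 + 48 + 72 = 122 \equiv -22 \pmod{144}$ is consistent with the shift $48n + 74 \equiv -22 \pmod{144}$ forced at $n=1$ in the proof of Theorem \ref{BCofSproof}, which is a useful sanity check.
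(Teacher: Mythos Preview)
Your proposal is correct and matches the paper's own proof essentially line for line: the paper also writes $I_2 \wedge V(1) \simeq S^2 \wedge \Sdet \wedge P \wedge V(1) \simeq \Sigma^{2+72+48} V(1) \simeq \Sigma^{-22} V(1)$, invoking Theorems \ref{BCofSproof}, \ref{SdetV(1)}, \ref{P-define}, and the $144$-periodicity of $V(1)$. Your extra commentary on associativity and invertibility is sound but not needed in the paper's terse version.
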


\begin{proof} Using Theorems \ref{P-define}, \ref{SdetV(1)}, and
\ref{BCofSproof}, we have
$$
I_2 \wedge V(1) \simeq S^2 \wedge \Sdet \wedge P \wedge V(1)
\simeq \Sigma^{2+72+48} V(1)
\simeq \Sigma^{-22}V(1)
$$
as $V(1)$ is $144$-periodic.
\end{proof}

%

\section{Appendix: The homotopy groups of $E^{h\GG_2^1}\wedge
V(1)$}

We display two charts, distilled from \cite{GHM}. In both charts we
adhere to the following conventions:

\begin{enumerate}

\item Each dot represents a group isomorphic to $\ZZ/3$.

\item The horizontal scale is by degree in homotopy groups; the
vertical scale is cosmetic only.

\item Both spectra are periodic; the bold part of
the  chart is one copy of the basic pattern in homotopy; the lighter 
parts are the periodic copies.

\item Horizontal line segments represent multiplication by $\beta = \beta_1$; 
the diagonal line segments represent multiplication by $\alpha = \alpha_1$.

\item The class $w = \omega^2v_2^{1/2}$ has the property that $w^2 = -v_2$; thus,
only odd powers of $w$ appear.

\item The leading term of each pattern is labeled with a name which
arises from the calculations in the cohomology of various subgroups
of $\GG_2$; see \cite{GHM} and section 3 for details. 

\item The element $1$ has degree $0$, $\alpha$
has degree $3$, $\beta$ had degree $10$, $w^{n}$ has degree
$8n$, $v_2^n$ has degree $16n$, and $a_{35}$ has degree $35$. But note that $a_{35}$ itself does
not appear as a homotopy class. As an example, in figure 2 the element $a_{35}w^{11}\beta$
has degree $35 + 8\cdot 11 + 10 = 133$ and the pattern on that
element extends to degree $173$.
\end{enumerate}

The first chart displays the 
homotopy groups of $E^{hG_{24}}\wedge V(1)$. This chart can also
be used to read off $\pi_\ast E^{hG_{12}}\wedge V(1)$ as
\begin{equation}\label{12from24}
\pi_\ast E^{hG_{12}}\wedge V(1) \cong \FF_9 \otimes
\pi_\ast E^{hG_{24}}\wedge V(1).
\end{equation}
There is one basic pattern, beginning on $1$, repeated three
times. This spectrum is $72$-periodic on the class $w^9=\omega^2v_2^{9/2}$.

\includegraphics[bb= 0 650 338 800]{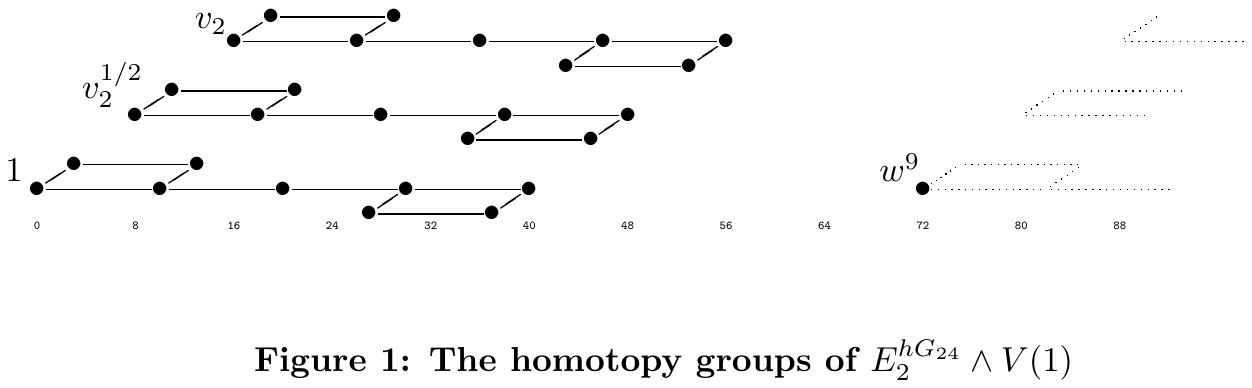}

The second chart displays the homotopy of $E^{h\GG_2^1}\wedge V(1)$.
There are
two basic patterns, one beginning on $1$, the other
on $w\alpha$. To each of these two patterns there is also a
dual pattern visible on $a_{35}w\beta$ and $a_{35}\alpha$
respectively. This spectrum is $144$-periodic on the class $v_2^9$.

The homotopy of $V(1)$  itself can be recovered from
$$
\pi_\ast V(1) = \Lambda(\zeta) \otimes
\pi_\ast E^{h\GG_2^1}\wedge V(1)
$$
where $\Lambda(\zeta)$ is the exterior algebra on a class of degree $-1$. 
\vfill\eject

\includegraphics[width=8.5in,angle=270]{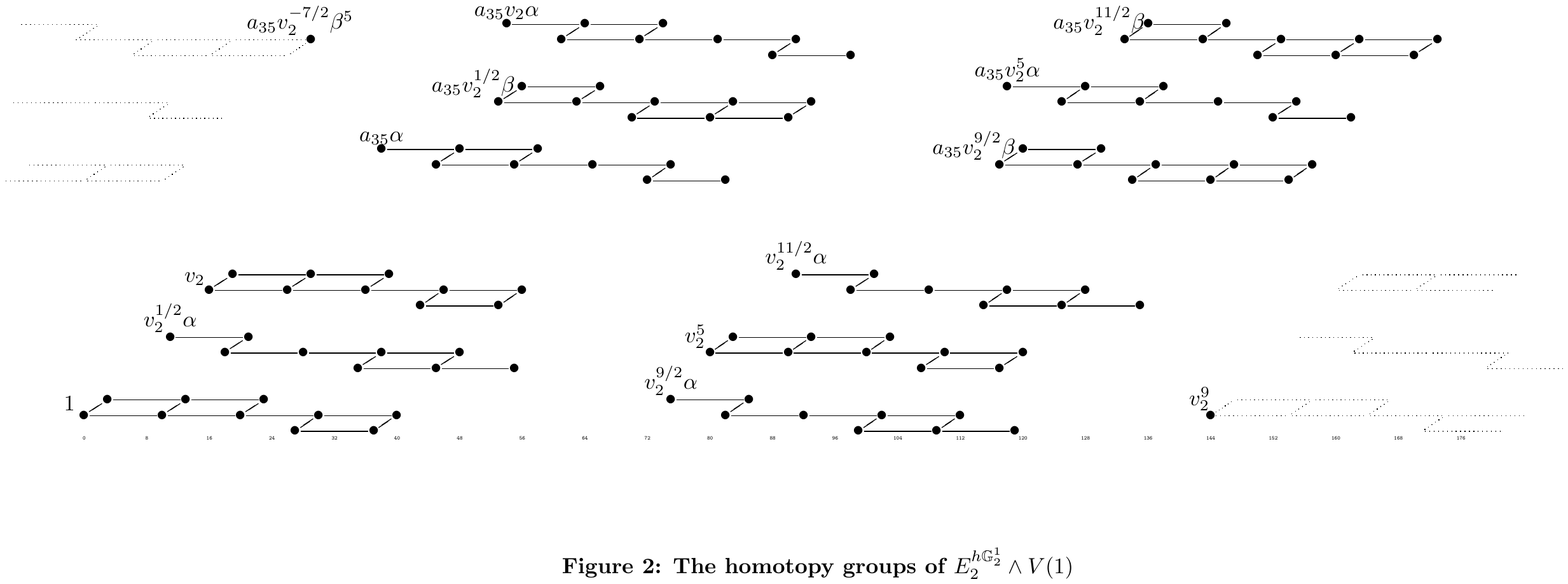}

\bibliographystyle{amsplain}

\bibliographystyle{amsplain}
\bibliography{bibghm}
\bigbreak
\bigbreak

\bigbreak

\end{document}